\providecommand{\url}[1]{}
\DeclareMathOperator{\Var}{V}
\DeclareMathOperator{\Cov}{Cov}
\DeclareMathOperator{\sign}{sign}
\DeclareMathOperator{\BV}{BV}
\numberwithin{equation}{section}
\newcommand{\tq}{\,:\,}
\newcommand{\Monm}{\mathcal{F}}
\newcommand{\tMonm}{\widetilde{\mathcal{F}}}
\DeclareMathOperator{\Mon}{Mon}
\DeclareMathOperator{\tMon}{\widetilde{Mon}}
\newcommand{\ra}{\rightarrow}
\newcommand{\E}{\mathbb{E}}
\newcommand{\p}{ { \mathbb P} }
\newcommand{\R}{ { \mathbb R} }
\newcommand{\N}{ { \mathbb N} }
\newcommand{\F}{\mathcal{F}}
\newcommand{\M}{\mathcal{M}}
\newcommand{\I}{\mathds{1}}
\theoremstyle{plain}
\newtheorem{thm}{Theorem}[section]
\newtheorem{lma}[thm]{Lemma}
\newtheorem{prop}[thm]{Proposition}
\newtheorem{defn}[thm]{Definition}
\theoremstyle{definition}
\newtheorem{rmkm}[thm]{Remark}
\begin{document}

\title{\large \bf Some almost sure results for unbounded functions of
intermittent maps and their associated Markov chains}

\author{\Large J. Dedecker\footnote{Universit\'{e} Paris 6-Pierre et Marie Curie, Laboratoire de Statistique
Th\'{e}orique et Appliqu\'{e}e.}, S. Gou\"{e}zel\footnote{Universit\'{e} Rennes
1, IRMAR and CNRS UMR 6625.} and F.
Merlev\`{e}de\footnote{Universit\'{e} Paris Est-Marne la Vall\'{e}e, LAMA
and CNRS UMR 8050.}}
\date{June 22, 2009}

\maketitle

\begin{abstract}
\vskip 0.15cm We consider a large class of piecewise expanding
maps $T$ of $[0,1]$ with a neutral fixed point, and their
associated Markov chain $Y_i$ whose transition kernel is the
Perron-Frobenius operator of $T$ with respect to the absolutely
continuous invariant probability measure. We give a large class
of unbounded functions $f$ for which the partial sums of
$f\circ T^i$ satisfy both a central limit theorem and a bounded
law of the iterated logarithm. For the same class, we prove
that the partial sums of $f(Y_i)$ satisfy a strong invariance
principle. When the class is larger, so that the partial sums
of $f\circ T^i$ may belong to the domain of normal attraction
of a stable law of index $p\in (1, 2)$, we show that the almost
sure rates of convergence in the strong law of large numbers
are the same as in the corresponding i.i.d.~case.

\par\vskip 1cm
\noindent {\it Mathematics Subject Classifications (2000):} 37E05, 37C30, 60F15. \\
{\it Key words:} Intermittency, almost sure convergence, law of the iterated logarithm,
strong invariance principle.\\
\end{abstract}

\section{Introduction and main results}

\subsection{Introduction}

The Pomeau-Manneville map is an explicit map of the interval
$[0,1]$, with a neutral fixed point at $0$ and a prescribed
behavior there. The statistical properties of this map are very
well known when one considers H\"{o}lder continuous observables,
but much less is known for more complicated observables.

Our goal in this paper is twofold. First, we obtain optimal
bounds for the behavior of functions of bounded variation with
respect to iteration of the Pomeau-Manneville map. Second, we
use these bounds to get a bounded law of the iterated logarithm
for a very large class of observables, that previous techniques
were unable to handle.

Since we use bounded variation functions, our arguments do not
rely on any kind of Markov partition for the map $T$.
Therefore, it turns out that our results hold for a larger
class of maps, that we now describe.

\begin{defn}
A map $T:[0,1] \to [0,1]$ is a generalized Pomeau-Manneville
map (or GPM map) of parameter $\gamma \in (0,1)$ if there exist
$0=y_0<y_1<\dots<y_d=1$ such that, writing $I_k=(y_k,y_{k+1})$,
\begin{enumerate}
\item The restriction of $T$ to $I_k$ admits a $C^1$ extension
$T_{(k)}$ to $\overline{I_k}$.
\item For $k\geq 1$, $T_{(k)}$ is $C^2$ on $\overline{I_k}$, and $|T_{(k)}'|>1$.
\item $T_{(0)}$ is $C^2$ on $(0, y_1]$, with $T_{(0)}'(x)>1$ for $x\in
(0,y_1]$, $T_{(0)}'(0)=1$ and $T_{(0)}''(x) \sim c
x^{\gamma-1}$ when $x\to 0$, for some $c>0$.
\item $T$ is topologically transitive.
\end{enumerate}
\end{defn}
The third condition ensures that $0$ is a neutral fixed point
of $T$, with $T(x)=x+c' x^{1+\gamma} (1+o(1))$ when $x\to 0$.
The fourth condition is necessary to avoid situations where
there are several absolutely continuous invariant measures, or
where the neutral fixed point does not belong to the support of
the absolutely continuous invariant measure.

\begin{figure}[htb]
\centering
  \begin{tikzpicture}[scale=1.5]
  \draw[very thick] (0,0) rectangle (5,5);
  \draw[gray, very thin]
      (1.2,0) -- +(0, 5)
      (2.6,0) -- +(0, 5)
      (4.2,0) -- +(0, 5)
      (0,0)   -- (5,5);
  \draw[thick]
      (0,0) .. controls +(45:1) and +(-100:1) .. (1.2, 3.2)
      (1.2, 4) -- (2.6, 1)
      (2.6, 1.3) .. controls +(55:1) and +(-120:2) .. (4.2, 5)
      (4.2,4.6) .. controls +(-85:3) and +(105:1) .. (5, 0);
  \foreach \x/\ytext in {0/$y_0=0$, 1.2/$y_1$, 2.6/$y_2$, 4.2/$y_3$, 5/$y_4=1$}
       \node[below, text height=1.5ex] at (\x, 0) {\ytext};
  \end{tikzpicture}
\caption{The graph of a GPM map, with $d=4$}
\end{figure}
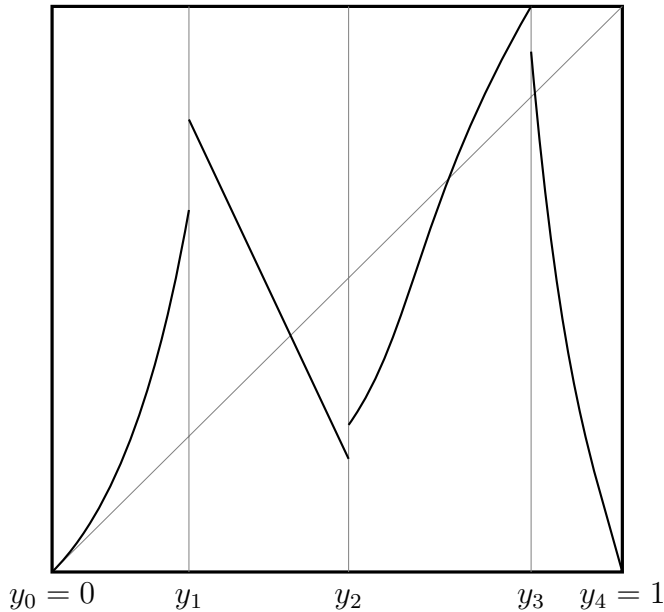

A well known GPM map is the original Pomeau-Manneville map
(1980). The Liverani-Saussol-Vaienti (1999) map
  \[
   T_\gamma(x)=
  \begin{cases}
  x(1+ 2^\gamma x^\gamma) \quad  \text{ if $x \in [0, 1/2]$}\\
  2x-1 \quad \quad \quad \ \  \text{if $x \in (1/2, 1]$}
  \end{cases}
  \]
is also a much studied GPM map of parameter $\gamma$. Both of
them have a Markov partition, but this is not the case in
general for GPM maps as defined above.

Theorem 1 in Zweim\"{u}ller (1998)\footnote{This theorem does not
apply directly to our maps since they do not satisfy its
assumption (A). However, this assumption is only used to show
that the jump transformation $\tilde T$ satisfies (AFU), and
this follows in our setting from the distortion estimates of
Lemma 5 in Young (1999).} shows that a GPM map $T$ admits a
unique absolutely continuous invariant probability measure
$\nu$, with density $h_\nu$. Moreover, it is ergodic, has full
support, and $h_\nu(x)/x^{-\gamma}$ is bounded from above and
below.

From the ergodic theorem, we know that $S_n(f)=n^{-1}
\sum_{i=0}^{n-1} (f\circ T^i - \nu(f))$ converges almost
everywhere to 0 when the function $f:[0,1]\to \R$ is
integrable. If $f$ is H\"{o}lder continuous, the behavior of
$S_n(f)$ is very well understood, thanks to Young (1999) and
Melbourne-Nicol (2005): these sums satisfy the almost sure
invariance principle for $\gamma<1/2$ (in particular, the
central limit theorem and the law of the iterated logarithm
hold). For the Liverani-Saussol-Vaienti map, Gou\"{e}zel (2004a)
shows that, when $\gamma\in (1/2,1)$ and $f$ is Lipschitz
continuous, $S_n(f)$ suitably renormalized converges to a
gaussian law (resp.~a stable law) if $f(0)=\nu(f)$ (resp.
$f(0)\not=\nu(f)$).

On the other hand, when $f$ is less regular, much less is
known. If $f$ has finitely many discontinuities and is
otherwise H\"{o}lder continuous, the construction of Young (1999)
could be adapted to obtain a tower avoiding the discontinuities
of $f$ -- the almost sure invariance principle follows when
$\gamma<1/2$. However, functions with countably many
discontinuities are not easily amenable to the tower method,
and neither are very simple unbounded functions such as
$g(x)=\ln |x-x_0|$ or $g_a(x)=|x-x_0|^a$ for any $x_0\not=0$.
This is far less satisfactory than the i.i.d.~situation, where
optimal moment conditions for the invariance principle or the
central limit theorem are known, and it seems especially
interesting to devise new methods than can handle functions
under moment conditions as close to the optimum as possible.

For the Liverani-Saussol-Vaienti maps, using martingale
techniques, Dedecker and Prieur (2009) proved that the central limit
theorem holds for a much larger class of functions (including all the
functions of bounded variation and several piecewise monotonic
unbounded discontinuous functions, for instance the functions
$g$ and $g_a$ above up to the optimal value of $a$) -- our
arguments below show that their results in fact hold for all GPM
maps, not only markovian ones. Our main goal in this article is
to prove the bounded law of the iterated logarithm for the same
class of functions. We shall also make  use of martingale techniques,
but we will also need a  more precise control on the
behavior of bounded variation functions under the iteration of
GPM maps.

\smallskip

The main steps of our approach are the following:
\begin{enumerate}
\item \emph{The main probabilistic tool.}
Let $(Y_1,Y_2,\dots)$ be an arbitrary stationary process. We describe in Paragraph
\ref{parproba} a coefficient $\alpha$ which measures (in a weak way) the asymptotic
independence in this process, and was introduced in Rio (2000). It is weaker than the
usual mixing coefficient of Rosenblatt (1956), since it only involves events of the
form $\{Y_i \leq x_i\}$, $x_i\in \R$. In particular, it can tend to $0$ for some
processes that are not Rosenblatt mixing (this will be the case for the processes to
be studied below). Thanks to its definition, $\alpha$ behaves well under the
composition with monotonic maps of the real line. This coefficient $\alpha$ contains
enough information to prove the maximal inequality stated in Proposition \ref{FN}, by
following the approach of Merlev\`{e}de (2008). In turn, this inequality implies (a
statement more precise than) the bounded law of the iterated logarithm  given in
Theorem \ref{ASthm}, for processes of the form $(f(Y_1),f(Y_2),\dots)$ where
$(Y_1,Y_2,\dots)$ has a well behaved $\alpha$ coefficient, and $f$ belongs to a large
class of functions.
\item \emph{The main dynamical tool.}
Let $K$ denote the Perron-Frobenius operator of $T$ with
respect to $\nu$, given by
  \begin{equation}
  \label{def_PF}
  K f(x)=\frac{1}{h(x)}\sum_{T(y)=x} \frac{h(y)}{|T'(y)|}f(y),
  \end{equation}
where $h$ is the density of $\nu$. For any bounded
measurable functions $f$, $g$, it satisfies $\nu(f \cdot
g\circ T)=\nu(K(f) g)$. Since $\nu$ is invariant by $T$, one has
$K(1)=1$, so that $K$ is a Markov operator.
Following the approach of Gou\"{e}zel (2007),  we will study
the operator $K$ on the space $\BV$ of bounded variation
functions, show that its iterates are uniformly bounded,
and estimate the contraction of $K^n$ from $\BV$ to $L^1$
(in Propositions \ref{propvar} and \ref{propdecaycorr}).

\item Let us denote by $(Y_i)_{i \geq 1}$ a stationary Markov
chain with invariant measure $\nu$ and transition kernel
$K$. Since the mixing coefficient $\alpha$ involves events
of the form $\{Y_i \leq x_i\}$, it can be read from the
behavior of $K$ on $\BV$. Therefore, the previous estimates
yield a precise control of the coefficient $\alpha$ of this
process. With Theorem \ref{ASthm}, this gives a bounded law
of the iterated logarithm for the process
$(f(Y_1),f(Y_2),\dots)$.

\item
It is well known that on the probability space $([0, 1],
\nu)$, the random variable $(f, f\circ T, \ldots , f\circ
T^{n-1})$ is distributed as $(f(Y_n),f(Y_{n-1}), \ldots,
f(Y_1))$. Since there is a phenomenon of time reversal, the
law of the iterated logarithm for $(f(Y_1),f(Y_2),\dots)$
does not imply the same result for $(f, f\circ T,\dots)$.
However, the technical statement of Theorem \ref{ASthm} is
essentially invariant under time reversal, and therefore
also gives a bounded law of the iterated logarithm for
$S_n(f)$.
\end{enumerate}

In the next three paragraphs, we describe our results more
precisely. The proofs are given in the remaining sections.

\begin{rmkm}
The class of maps covered by our results could be further
extended, as follows. First, we could allow finitely many
neutral fixed point, instead of a single one (possibly with
different behaviors). Second, we could allow infinitely many
monotonicity branches for $T$ if, away from the neutral fixed
points, the quantity $|T''|/(T')^2$ remains bounded, and the
set $\{T(Z)\}$, for $Z$ a monotonicity interval, is finite
(this is for instance satisfied if all branches but finitely
many are onto). Finally, we could drop the topological
transitivity.

The ergodic properties of this larger class of maps is fully
understood thanks to the work of Zweim\"{u}ller (1998): there are
finitely many invariant measures instead of a single one, and
the support of each of these measures is a finite union of
intervals. Our arguments still apply in this broader context,
although notations and statements become more involved. For the
sake of simplicity, we shall only consider the class of GPM
maps (which is already quite large).
\end{rmkm}

\subsection{Statements of the results for intermittent
maps} \label{parglob}

\begin{defn}
A function $H$ from ${\mathbb R}_+$ to $[0, 1]$ is a tail
function if it is non-increasing, right continuous, converges
to zero at infinity, and $x\rightarrow x H(x)$ is integrable.
\end{defn}
\begin{defn}
\label{defMon}
If $\mu$ is a probability measure on $\mathbb R$ and $H$ is a
tail function, let $\Mon(H, \mu)$ denote the set of functions
$f:\R\to \R$ which are monotonic on some open interval and null
elsewhere and such that $\mu(|f|>t)\leq H(t)$. Let $\Monm(H,
\mu)$ be the closure in ${\mathbb L}^1(\mu)$ of the set of
functions which can be written as $\sum_{\ell=1}^L a_\ell
f_\ell$, where $\sum_{\ell=1}^L |a_\ell| \leq 1$ and $f_\ell\in
\Mon(H, \mu)$.
\end{defn}

Note that a function belonging to $\Monm(H, \mu)$ is allowed to
blow up at an infinite number of points. Note also that any
function $f$ with bounded variation ($\BV$) such that $|f|\leq
M_1$ and $\|df\|\leq M_2$ belongs to the class $\Monm(H, \mu)$
for any $\mu$ and the tail function $H=\I_{[0, M_1+2M_2)}$
(here and henceforth, $\|df\|$ denotes the variation norm of
the signed measure $df$). Moreover, if a function $f$ is
piecewise monotonic with $N$ branches, then it belongs to
$\Monm(H, \mu)$ for $H(t)=\mu(|f|>t/N)$. Finally, let us
emphasize that there is no requirement on the modulus of
continuity for functions in $\Monm(H, \mu)$

Our first result is a bounded law of the iterated logarithm,
when $0<\gamma< 1/2$.

\begin{thm} \label{ASmap} Let $T$ be a GPM map
with parameter $\gamma\in (0,1/2)$ and invariant measure $\nu$.
Let $H$ be a tail function with
\begin{equation}\label{lilcond}
\int_0^{\infty} x (H(x))^{\frac{1-2\gamma}{1-\gamma}} dx <\infty \,.
\end{equation}
Then, for any $f \in \Monm(H, \nu)$, the series
\[\sigma^2= \nu((f-\nu(f))^2)+ 2 \sum_{k>0}
\nu ((f-\nu(f))f\circ T^k)
\]
converges absolutely to some nonnegative number. Moreover,
\begin{enumerate}
\item There exists a nonnegative constant $A$ such that
\begin{equation}\label{lilbounded}
  \sum_{n=1}^\infty \frac{1}{n} \nu \Big( \max_{1 \leq k \leq
  n} \Big |\sum_{i=0}^{k-1} (f \circ T^i
  -\nu(f))\Big|\geq A \sqrt {n \ln (\ln (n))} \Big) < \infty \, ,
\end{equation}
and consequently\footnote{see e.g.~Stout (1974), Chapter 5.}
\begin{equation*}
\limsup_{n \rightarrow \infty} \frac{1}{\sqrt {n \ln (\ln (n))}}
\Big|\sum_{i=0}^{n-1} (f\circ T^i - \nu(f)) \Big | \leq A
\, , \ \text{almost everywhere.}
\end{equation*}
\item Let $(Y_i)_{i \geq 1}$ be a stationary Markov chain with
transition kernel $K$ and invariant measure $\nu$, and let
$X_i=f(Y_i)-\nu(f)$. Enlarging if necessary the underlying
probability space, there exists a sequence $(Z_i)_{i \geq
1}$ of i.i.d.~gaussian random variables with mean zero and
variance $\sigma^2$ such that
\begin{equation}\label{ASIP}
\Big|\sum_{i=1}^n (X_i- Z_i) \Big| =o(\sqrt {n \ln (\ln (n))})\, ,
  \text{almost surely.}
\end{equation}
\end{enumerate}
\end{thm}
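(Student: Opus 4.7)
The plan is to apply the general probabilistic result of Theorem \ref{ASthm} to the stationary Markov chain $(Y_i)_{i\geq 1}$, using the contraction of the Perron--Frobenius operator $K$ on $\BV$ to supply the required mixing information, and then to transfer the bounded law of the iterated logarithm to the map side through the time-reversal identity relating $(f\circ T^i)$ and $(f(Y_{n-i}))$.

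The first step is to control the $\alpha$-mixing coefficient of $(Y_i)_{i\geq 1}$. Since $\alpha$ only involves events of the form $\{Y_i\leq x\}$, the Markov property makes it governed by quantities like $\|K^n\I_{(-\infty,x]}-\nu(\I_{(-\infty,x]})\|_{L^1(\nu)}$. Each such indicator has variation bounded by $2$, so Propositions \ref{propvar} and \ref{propdecaycorr} yield an explicit polynomial decay, of the order $\alpha(n)=O(n^{-(1-\gamma)/\gamma})$, which is summable for $\gamma<1/2$. Next comes a calibration step: I must verify that this rate of $\alpha$, combined with the tail condition (\ref{lilcond}) on $f\in\Monm(H,\nu)$, meets the hypothesis required by Theorem \ref{ASthm}. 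The exponent $(1-2\gamma)/(1-\gamma)$ in (\ref{lilcond}) is adapted to the mixing rate $n^{-(1-\gamma)/\gamma}$: after the change of variables transforming $\int xH(x)^\beta dx$ into an integral of $Q^2$ against $u^{\beta-1}du$, where $Q$ is a quantile function of $|f|$ under $\nu$, the Rio-type condition underlying Theorem \ref{ASthm} (roughly $\sum_n \int_0^{\alpha(n)} Q^2(u)\,du<\infty$) reduces to (\ref{lilcond}). Granted this, Theorem \ref{ASthm} yields at once the absolute convergence of the series defining $\sigma^2$, the bounded law of the iterated logarithm for $(f(Y_i))$, and the strong invariance principle (\ref{ASIP}); this is conclusion~(2).

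For conclusion~(1), I would invoke the classical distributional identity: under $\nu$, the vector $(f, f\circ T, \ldots, f\circ T^{n-1})$ has the same law as $(f(Y_n), f(Y_{n-1}), \ldots, f(Y_1))$. Consequently,
\begin{equation*}
\max_{1\leq k\leq n}\Bigl|\sum_{i=0}^{k-1}(f\circ T^i-\nu(f))\Bigr|
\stackrel{d}{=} \max_{1\leq k\leq n}\Bigl|\sum_{j=n-k+1}^{n}X_j\Bigr|
\leq 2\max_{1\leq k\leq n}\Bigl|\sum_{j=1}^{k}X_j\Bigr|,
\end{equation*}
so that (\ref{lilbounded}) follows immediately from the maximal inequality already established for the chain. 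The almost-sure $\limsup$ bound in the footnote is then a standard consequence (Stout, 1974).

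The main obstacle will be the calibration step: matching the polynomial decay of $\alpha$ coming from the dynamical estimates to the precise tail hypothesis (\ref{lilcond}), so that the integrability assumption behind Theorem \ref{ASthm} is exactly satisfied and no gap remains. The time-reversal trick, by contrast, is cheap, but it reveals a genuine asymmetry: it transports the bounded LIL from the chain to the map, yet it does \emph{not} transport the strong invariance principle (\ref{ASIP}), which cannot in general hold on the map side with the same i.i.d.\ Gaussian sequence because the underlying process is not reversible.
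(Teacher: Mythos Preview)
Your proposal is correct and follows essentially the same route as the paper: bound $\alpha_{k,\mathbf{Y}}(n)$ by $Cn^{-(1-\gamma)/\gamma}$ via Propositions~\ref{propvar} and~\ref{propdecaycorr} (this is Proposition~\ref{weakalpha}), verify that this decay combined with \eqref{lilcond} yields the hypothesis \eqref{condQ} of Theorem~\ref{ASthm} through the change of variables you describe, apply Theorem~\ref{ASthm} to get $\sigma^2$, \eqref{r1app2LIL} and \eqref{asr1}, and then use the time-reversal inequality $\max_k|\sum_{j=n-k+1}^n X_j|\le 2\max_k|\sum_{j=1}^k X_j|$ to transfer \eqref{r1app2LIL} to the map. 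Your identification of the calibration as the one nontrivial bookkeeping step, and of the asymmetry preventing the ASIP from passing to $(f\circ T^i)$, matches the paper's own discussion.
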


In particular, we infer that the bounded law \eqref{lilbounded}
holds for any $\BV$ function $f$ provided that $\gamma<1/2$.
Note also that \eqref{lilcond} is satisfied provided that
$H(x)\leq C x^{-2(1-\gamma)/(1-2\gamma)}(\ln(x))^{-b}$ for $x$
large enough and $b>(1-\gamma)/(1-2\gamma)$. Let us consider
two simple examples. Since the density $h_{\nu}$ of $\nu$ is
such that $h_{\nu}(x) \leq C x^{-\gamma}$ on $(0, 1]$, one can
easily prove that:
\begin{enumerate}
\item If $f$ is positive and non increasing on (0, 1), with
\[
f(x) \leq \frac{C}{x^{(1-2\gamma)/2}|\ln(x)|^{b}}
\quad \text{near 0, for some $b>1/2$,}
\]
then  \eqref{lilbounded}  and \eqref{ASIP} hold.
\item If $f$ is positive and non decreasing on (0, 1), with
\[
f(x) \leq \frac{C}{(1-x)^{(1-2\gamma)/(2-2\gamma)}|\ln(1-x)|^{b}}\quad
\text{near 1, for some  $b>1/2$,}
\]
then
\eqref{lilbounded}  and \eqref{ASIP} hold.
\end{enumerate}

In fact, if $f \in \Monm(H, \nu)$ for some $H$ satisfying
\eqref{lilcond} then the central limit theorem and the weak
invariance principle hold. This can be easily deduced from the
proof of Theorem 4.1 in Dedecker and Prieur (2009) and by using
the upper bound for the coefficient $\alpha_{1, {\bf Y}} (k)$
given in Proposition \ref{weakalpha} (which improves on the
corresponding bound in Dedecker and Prieur (2009)). Hence, if
$f$ is as in Item 1 above, both the central limit theorem and
the bounded law of the iterated logarithm hold.

An open question is: can we obtain the almost sure invariance
principle \eqref{ASIP} for the sequence $(f\circ T^i)_{i \geq
0}$ instead of $(f(Y_i))_{i \geq 1}$? According to the
discussion in Melbourne and Nicol (2005), this appears to be a
rather delicate question. Indeed, to obtain Item 2 of Theorem
\ref{ASmap}, we use first a maximal inequality for the partial
sums $\sum_{i=1}^k f(Y_i)$ and next a result by Voln\'{y} and Samek
(2000) on the approximating martingale. As pointed out by
Melbourne and Nicol (2005, Remark 1.1), we cannot go back to
the sequence $(f\circ T^i)_{i \geq 0}$, because the system is
not closed under time reversal. Using another approach, going
back to Philipp and Stout (1975) and Hofbauer and Keller
(1982), Melbourne and Nicol (2005) have proved the almost sure
invariance principle for $(f\circ T^i)_{i \geq 0}$ when
$\gamma<1/2$ and $f$ is any H\"{o}lder continuous function, with a
better error bound $O(n^{1/2-\epsilon})$ for some $\epsilon>0$.
As a consequence, their result imply the functional law of the iterated logarithm for H\"{o}lder continuous function,
which is much more precise than the bounded law.
However, our approach is clearly distinct from that of Melbourne and Nicol (2005), for we
cannot deduce the control \eqref{lilbounded} from an almost
sure invariance principle.

\medskip

In the next theorem, we give rates of convergence in the strong law of large numbers under
weaker conditions than (\ref{lilcond}), which do not imply the  central limit theorem.

\begin{thm} \label{LLNmap} Let $1<p<2$ and $0<\gamma< 1/p$. Let
$T$ be a GPM map with parameter $\gamma$ and invariant measure
$\nu$. Let $H$ be a tail function with
\begin{equation}\label{ratecond}
 \int_0^{\infty} x^{p-1} (H(x))^{\frac{1-p\gamma}{1-\gamma}} dx < \infty \,.
\end{equation}
Then, for any $f \in \Monm(H, \nu)$ and any $\varepsilon>0$,
one has
\begin{equation}\label{SLN}
  \sum_{n=1}^\infty \frac{1}{n} \nu \Big( \max_{1 \leq k \leq
  n} \Big |\sum_{i=1}^k (f \circ T^i
  -\nu(f))\Big|\geq n^{1/p} \varepsilon \Big) < \infty \, .
\end{equation}
Consequently, $n^{-1/p}\sum_{k=1}^n (f \circ T^i -\nu(f))$
converges to $0$ almost everywhere.
\end{thm}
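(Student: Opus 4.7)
The plan is a three-step reduction: transfer the claim from $(f\circ T^i)$ to the Markov chain $(f(Y_i))$ via the time-reversal identity, read off the $\alpha$-coefficient of the chain from the contraction of $K^n$ on $\BV$, and then apply a maximal inequality combined with a dyadic truncation calibrated to the tail function $H$.

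First, using the identity of laws between $(f,f\circ T,\ldots,f\circ T^{n-1})$ on $([0,1],\nu)$ and $(f(Y_n),\ldots,f(Y_1))$, I obtain
\[
\nu\Big(\max_{1\leq k\leq n}\Big|\sum_{i=1}^{k}(f\circ T^i-\nu(f))\Big|\geq n^{1/p}\varepsilon\Big)\leq\p\Big(\max_{1\leq k\leq n}|S_k^Y|\geq n^{1/p}\varepsilon/2\Big),
\]
where $S_k^Y=\sum_{i=1}^{k}(f(Y_i)-\nu(f))$, so it suffices to prove (\ref{SLN}) for the forward Markov-chain partial sums. Second, Propositions \ref{propdecaycorr} and \ref{weakalpha} yield $\|K^n g-\nu(g)\|_{L^1}\lesssim n^{1-1/\gamma}\|g\|_{\BV}$, and hence $\alpha_{1,\mathbf{Y}}(k)\lesssim k^{1-1/\gamma}$, equivalently $\alpha_{1,\mathbf{Y}}^{-1}(u)\lesssim u^{-\gamma/(1-\gamma)}$; this is the weak-dependence input that drives the maximal inequality.

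For the third and main step I would dyadically decompose $n\in[2^j,2^{j+1})$, which reduces (\ref{SLN}) to the summability of $\p(\max_{k\leq 2^{j+1}}|S_k^Y|\geq\varepsilon\,2^{j/p})$ over $j$, and I would truncate $f$ at level $M_j=2^{j/p}$ by writing $f=f_j^{\ast}+\bar f_j$ with $f_j^{\ast}=(f\wedge M_j)\vee(-M_j)$. The truncated $f_j^{\ast}$ still lies in a $\tMonm$ class with a tail function dominated by $H$, so Proposition \ref{FN} applies, and after Markov's inequality at an exponent $q\in(1,p)$ its contribution is controlled by a Rio-type integral $\int_0^\infty x^{p-1}H(x)^{(1-p\gamma)/(1-\gamma)}\,dx$; the unbounded remainder $\bar f_j$ gives only an $L^1$ contribution bounded by $2^j\int_{M_j}^\infty H(u)\,du$, again summable over $j$ thanks to (\ref{ratecond}).

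The main obstacle is the exponent bookkeeping in this last step: I need to match the $\alpha$-decay $k^{1-1/\gamma}$ with the quantile of the truncated $f_j^{\ast}$ so that the resulting Rio-type integral comes out to exactly the integrand of (\ref{ratecond}), and I must check that $f_j^{\ast}$ inherits a tail function for which Proposition \ref{FN} is applicable with uniform constants in $j$. Once (\ref{SLN}) is established, the stated a.e.\ convergence $n^{-1/p}\sum_{i=1}^n(f\circ T^i-\nu(f))\to 0$ is a standard Borel--Cantelli consequence along the dyadic subsequence $n=2^j$ combined with monotonicity of the maximal partial sums.
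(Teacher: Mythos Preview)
Your first two steps match the paper's proof exactly: the time-reversal inequality \eqref{equ2law} reduces to the Markov-chain partial sums, and Proposition~\ref{weakalpha} supplies the decay $\alpha_{k,\mathbf Y}(n)\lesssim n^{-(1-\gamma)/\gamma}$.

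For the third step, however, the paper does \emph{not} truncate and invoke Proposition~\ref{FN}. Instead it applies Theorem~4 of Dedecker--Merlev\`ede (2007) as a black box, obtaining directly
\[
\sum_{n\geq 1}\frac1n\,\p\Big(\max_{k\leq n}|S_k^Y|\geq \varepsilon n^{1/p}\Big)
\leq C\sum_{i\geq 0}(i+1)^{p-2}\int_0^{\gamma_i}Q_{|X_0|}^{p-1}\!\circ G_{|X_0|}(u)\,du,
\]
with $\gamma_i=\|\E(X_i\mid\M_0)\|_1$. Using $\gamma_i\leq 4\int_0^{\alpha_{1,\mathbf Y}(i)}Q$ and a H\"older/change-of-variables argument, this collapses to $\int_0^1(\alpha_1^{-1}(u))^{p-1}Q^p(u)\,du$, which is precisely \eqref{ratecond} after translating $Q\leftrightarrow H$.

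Your route via Proposition~\ref{FN} is genuinely different, and there is a gap precisely where you flag ``the main obstacle''. Proposition~\ref{FN} is a Fuk--Nagaev bound tuned to the $L^2$/LIL regime: it requires a \emph{square}-integrable quantile and the parameter $s_n^2\geq 4n\sum_{i<n}\int_0^{\alpha_i}Q^2$. Your sentence ``after Markov's inequality at an exponent $q\in(1,p)$'' does not fit---Proposition~\ref{FN} outputs a tail bound, not a moment bound, so there is nothing to which Markov applies. If instead you try to use \eqref{r1FN} directly with $x\sim n^{1/p}$ and the truncated quantile $Q\wedge M_j$, the ratio $x^2/s_n^2$ is not under control: condition~\eqref{ratecond} does \emph{not} force $\sum_i\int_0^{\alpha_i}Q<\infty$ (compare the exponents when $Q(u)\sim u^{-a}$), so the minimal admissible $s_n^2$ may grow too fast for the polynomial term $(1+2x^2/(rs_n^2))^{-r/8}$ to be summable. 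The correct tool here is an $L^p$-regime maximal inequality such as Theorem~4 or Inequality~(3.9) of Dedecker--Merlev\`ede (2007), not Proposition~\ref{FN}. Your treatment of the unbounded remainder $\bar f_j$ is essentially fine: \eqref{ratecond} and monotonicity of $H$ give $H(M)\lesssim M^{-p(1-\gamma)/(1-p\gamma)}$, hence $\sum_j 2^{j(1-1/p)}\int_{M_j}^\infty H<\infty$; but the bounded-part step needs a different inequality.
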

Note that \eqref{ratecond} is satisfied provided that $H(x)\leq C
x^{-p(1-\gamma)/(1-p\gamma)}(\ln(x))^{-b}$ for $x$ large enough and
$b>(1-\gamma)/(1-p\gamma)$. For instance, one can easily prove that, for  $1<p < 2$ and
$0<\gamma< 1/p$,
\begin{enumerate}
\item If $f$ is positive and non increasing on $(0, 1)$, with
\[f(x) \leq \frac{C}{x^{(1-p\gamma)/p}|\ln(x)|^{b}}
\quad \text{near 0, for some $b>1/p$,}\] then
\eqref{SLN} holds.
\item If $f$ is positive and non decreasing on $(0, 1)$, with
\[f(x) \leq \frac{C}{(1-x)^{(1-p\gamma)/(p-p\gamma)}|\ln(1-x)|^{b}}\quad  \text{near 1, for some  $b>1/p$,}\]
 then  \eqref{SLN} holds.
\end{enumerate}

The condition (\ref{ratecond}) of Theorem \ref{LLNmap} means exactly that the probability
$\mu_{H, p, \gamma}$ on ${\mathbb
 R}_+$ such that $ \mu_{H, p,
\gamma}((x, \infty))=(H(x))^{\frac{1-p\gamma}{1-\gamma}}$ has a moment of order $p$. Let
us see what happen if we only assume that $\mu_{H, p, \gamma}$ has  a weak moment of
order $p$.

\begin{thm}\label{LLNmap2}
Let $1<p\leq 2$ and $0<\gamma< 1/p$. Let $T$ be a GPM map with
parameter $\gamma$ and invariant measure $\nu$. Let $H$ be a
tail function with
\begin{equation}\label{ratecond*}
 (H(x))^{\frac{1-p\gamma}{1-\gamma}} \leq C x^{-p}  \,.
\end{equation}
Then, for any $f \in \Monm(H, \nu)$, any $b>1/p$ and  any
$\varepsilon>0$, one has
\begin{equation}\label{SLN2}
  \sum_{n=1}^\infty \frac{1}{n} \nu \Big( \max_{1 \leq k \leq
  n} \Big |\sum_{i=0}^{k-1} (f \circ T^i
  -\nu(f))\Big|\geq n^{1/p}(\ln(n))^{b} \varepsilon \Big) < \infty \, .
\end{equation}
Consequently, $n^{-1/p}(\ln(n))^{-b}\sum_{k=0}^{n-1} (f \circ
T^i -\nu(f))$ converges to $0$ almost everywhere.
\end{thm}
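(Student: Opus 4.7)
The plan is to follow the proof of Theorem~\ref{LLNmap}, adding a truncation step that converts the weak $p$-th moment hypothesis~\eqref{ratecond*} into an effective bound; the logarithmic factor $(\ln n)^b$ with $b>1/p$ will appear as the precise amount of slack needed for the truncation argument to close. By the time-reversal device of step~4 in the main-steps outline of the introduction, the $\nu$-distribution of $(f,f\circ T,\ldots,f\circ T^{n-1})$ coincides with the $\p$-distribution of $(f(Y_n),\ldots,f(Y_1))$, so the probability in~\eqref{SLN2} equals
\[
  \p\!\left(\max_{1\leq k\leq n}\Bigl|\sum_{i=1}^{k}X_i\Bigr|\geq \varepsilon\,n^{1/p}(\ln n)^b\right),\qquad X_i=f(Y_i)-\nu(f),
\]
and it suffices to estimate this quantity for the stationary Markov chain.

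Fix $\varepsilon>0$ and set $a_n=\varepsilon\, n^{1/p}(\ln n)^{b}/4$. I would decompose $f-\nu(f)=f_n'+f_n''$ where $f_n'$ is obtained by capping each monotone summand in the definition of $\Monm(H,\nu)$ at height $a_n$ and then recentering. The class $\Monm(H,\nu)$ is stable under this procedure, and~\eqref{ratecond*} gives
\[
  \nu(|f|>a_n)\leq H(a_n)\leq C\,a_n^{-p(1-\gamma)/(1-p\gamma)}\leq C\,n^{-(1-\gamma)/(1-p\gamma)}(\ln n)^{-bp(1-\gamma)/(1-p\gamma)}.
\]
For $p>1$ one has $(1-\gamma)/(1-p\gamma)>1$, so this bound is summable, which through stationarity and a union bound disposes of the ``tail'' part $f_n''$. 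For the truncated part $f_n'$, which is bounded by $2a_n$ and still lies in $\Monm$ with a modified tail function, I would invoke the maximal inequality of Proposition~\ref{FN} together with the control of $\alpha_{1,\mathbf{Y}}(k)$ furnished by Proposition~\ref{weakalpha}, as in the proof of Theorem~\ref{LLNmap}. A variance/moment computation together with Markov's inequality at exponent $p$ should then yield
\[
  \p\!\left(\max_{1\leq k\leq n}\Bigl|\sum_{i=1}^{k}\bigl(f_n'(Y_i)-\E f_n'(Y_i)\bigr)\Bigr|\geq \tfrac{\varepsilon}{2}\,n^{1/p}(\ln n)^b\right)\leq C\,n^{-1}(\ln n)^{-\eta}
\]
for some $\eta>0$ coming from the gap $b-1/p$, which is summable after multiplication by $1/n$. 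The almost-everywhere conclusion then follows from~\eqref{SLN2} by a Borel--Cantelli argument on the dyadic subsequence $n=2^k$, together with monotonicity of $k\mapsto\max_{j\leq k}|S_j|$.

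The main obstacle will be the fine calibration of the truncation level $a_n$ and the sharp quantitative form of the maximal inequality applied to $f_n'$. The bound produced by Proposition~\ref{FN} for a bounded observable in $\Monm$ scales with $a_n$ through the variance, and the choice $a_n\asymp n^{1/p}(\ln n)^b$ with $b>1/p$ is what makes the resulting estimate beat $n^{-1}$ once the factor $(n^{1/p}(\ln n)^b)^{-p}$ from Markov's inequality is included; any smaller power of $\ln n$ would leave a divergent sum. A secondary technical point is that the truncation must be performed at the level of the individual monotone summands that build up an element of $\Monm(H,\nu)$, so that the truncated observable remains inside a controlled subclass to which the $\alpha$-based maximal inequality applies with the same constants as in Theorem~\ref{LLNmap}.
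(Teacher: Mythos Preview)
Your outline diverges from the paper's argument and leaves the decisive step unjustified. The paper does \emph{not} truncate externally and then appeal to Proposition~\ref{FN}. Instead it invokes Inequality~(3.9) of Dedecker--Merlev\`ede (2007), a maximal bound expressed directly through the quantile $Q_{|X_0|}$ and the function $\bar R_{|X_0|}(u)=U_{|X_0|}(u)Q_{|X_0|}(u)$; after summing in $n$, changing variables, and using $\alpha_{1,\mathbf Y}(i)\leq Ci^{-(1-\gamma)/\gamma}$, the paper arrives at the single integral
\[
\int_0^1 \frac{u^{-\gamma(p-1)/(1-\gamma)}}{|\ln u|^{bp}\vee 1}\,Q^p(u)\,du
\leq C\int_0^1 \frac{du}{u\,(|\ln u|^{bp}\vee 1)},
\]
finite precisely when $bp>1$. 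Your remark ``as in the proof of Theorem~\ref{LLNmap}'' is also misplaced: that proof uses Theorem~4 of Dedecker--Merlev\`ede (2007), not Proposition~\ref{FN}.

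The genuine gap in your scheme is the bound on the truncated part. Proposition~\ref{FN} is a Fuk--Nagaev-type tail inequality, not a $p$th-moment inequality, so ``Markov's inequality at exponent~$p$'' does not combine with it in any direct way; you would have to choose $r$ and $s_n$ explicitly and estimate both terms of~\eqref{eqmainmax}. That is nontrivial here: the hypothesis $s_n^2\geq 4n\sum_{i<n}\int_0^{\alpha_{1,\mathbf Y}(i)}Q^2(u)\,du$ must be checked for the truncated quantile $Q\wedge 2a_n$, and in the full range $0<\gamma<1/p$ (which allows $\gamma\geq 1/2$ when $p$ is close to $1$) the series $\sum_i\alpha_{1,\mathbf Y}(i)$ diverges, so the crude bound $Q\wedge 2a_n\leq 2a_n$ is useless and a finer splitting is required. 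Your asserted conclusion $\p(\cdots)\leq Cn^{-1}(\ln n)^{-\eta}$ is stated without any computation, and that computation \emph{is} the proof. (Incidentally, even the target inequality is stronger than needed: for $\sum_n n^{-1}\p(\cdots)<\infty$ it suffices that $\p(\cdots)\leq C(\ln n)^{-1-\delta}$.)

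Two minor points: the time-reversal step yields only the inequality~\eqref{equ2law}, not an equality of probabilities; and $\nu(|f|>a_n)\leq H(a_n)$ holds for a single monotone piece, not for a general $f\in\Monm(H,\nu)$, so the union-bound treatment of the tail must be stated at the level of the summands $f_\ell$, with separate care for the recentering.
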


Applying Theorem \ref{LLNmap2}, one can easily prove that, for $1<p\leq 2$ and $0<\gamma<
1/p$,
\begin{enumerate}
\item If $f$ is positive and non increasing on $(0, 1)$, with
$f(x) \leq Cx^{-(1-p\gamma)/p} $ then \eqref{SLN2} holds.
\item If $f$ is positive and non decreasing on $(0, 1)$, with
$f(x) \leq C(1-x)^{-(1-p\gamma)/(p-p\gamma)}$ then
\eqref{SLN2} holds.
\end{enumerate}

This requires additional comments.  Gou\"{e}zel (2004a) proved that
if $f$ is exactly of the form $f(x)=x^{-(1-p\gamma)/p}$ for $
1<p <2$ and $0< \gamma <1/p$, then $n^{-1/p}\sum_{i=0}^{n-1} (f
\circ T^i -\nu(f))$ converges in distribution on $([0, 1],
\nu)$ to a centered one-sided stable law of index $p$, that is
a stable law whose distribution function $F^{(p)}$ is such that
$x^pF^{(p)}(-x) \rightarrow 0$ and $x^p(1-F^{(p)}(x))
\rightarrow c$,  as $x \rightarrow \infty$, with $c>0$. Our
theorem shows that $n^{-1/p}(\ln(n))^{-b} (\sum_{i=1}^n (f
\circ T^i -\nu(f)))$ converges almost everywhere to zero for
$b>1/p$. This is in total accordance with the i.i.d.~situation,
as we describe now. Let $(X_i)_{i \geq 1}$ be a sequence of
i.i.d.~centered random variables satisfying $n^{-1/p} (X_1+
\cdots + X_n) \to F^{(p)}$. It is well known (see for instance
Feller (1966), page 547) that this is equivalent to $x^p
{\mathbb P}(X_1< -x)\rightarrow 0$ and $x^p {\mathbb
P}(X_1>x)\rightarrow c$ as $x \rightarrow \infty$. For any
nondecreasing sequence $(b_n)_{n \geq 1}$ of positive numbers,
either $(X_1+ \cdots + X_n)/b_n $ converges to zero almost
surely or $\limsup_{n \rightarrow \infty} |X_1+ \cdots +
X_n|/b_n=\infty$ almost surely, according as $\sum_{n=1}^\infty
{\mathbb P}(|X_1|>b_n)< \infty$ or $\sum_{n=1}^\infty {\mathbb
P}(|X_1|>b_n)= \infty$ -- this follows from the proof of
Theorem 3 in Heyde (1969). If one takes $b_n=n^{1/p}(\ln(n))^b$
we obtain the constraint $b>1/p$ for the almost sure
convergence of $n^{-1/p}(\ln(n))^{-b} (X_1+ \cdots +X_n)$ to
zero. This is exactly the same constraint as in our dynamical
situation.

Let us comment now on the case $p=2$. In his (2004a) paper,
Gou\"{e}zel also proved that if $f$ is exactly of the form
$f(x)=x^{-(1-2\gamma)/2}$ then the central limit theorem holds
with the normalization $\sqrt{n \ln (n)}$. As mentioned above
such an  $f$ belongs to the class $\Monm(H, \nu)$ for some $H$
satisfying (\ref{ratecond*}) with $p=2$, which means that
$\mu_{H, 2, \gamma}$ has a weak moment of order 2. This again
is in accordance with the i.i.d.~situation.  Let $(X_i)_{i \geq
1}$ be a sequence of i.i.d.~centered random variables such that
$x^2{\mathbb P}(X_1<-x )\rightarrow c_1$ and $x^2{\mathbb
P}(X_1>x)\rightarrow c_2$ as $x$ tends to infinity, with
$c_1+c_2=1$. Then $(n \ln(n))^{-1/2}(X_1+ \cdots + X_n)$
converges in distribution to a standard gaussian distribution,
but according to Theorem 1 in Feller (1968),
$$
 \limsup_{n \rightarrow \infty} \frac{1}{\sqrt {n \ln(n) \ln(\ln(n))}}\sum_{i=1}^n X_i= \infty \, .
$$
Moreover, if $(b_n)_{n \geq 1}$ is a non decreasing sequence such that $b_n/\sqrt {n
\ln(n) \ln(\ln(n))}\rightarrow \infty$ (plus the mild conditions (2.1) and (2.2) in
Feller's paper), then either $(X_1+ \cdots + X_n)/b_n $ converges to zero almost surely
or $\limsup_{n \rightarrow \infty} |X_1+ \cdots + X_n|/b_n=\infty$ almost surely,
according as $\sum_{n=1}^\infty {\mathbb P}(|X_1|>b_n)< \infty$ or $\sum_{n=1}^\infty
{\mathbb P}(|X_1|>b_n)= \infty$. If one takes $b_n=n^{1/2}(\ln(n))^b$ we obtain the
constraint $b>1/2$ for the almost sure convergence of $n^{-1/2}(\ln(n))^{-b} (X_1+ \cdots
+X_n)$ to zero. This is exactly the same constraint as in our dynamical situation.

\subsection{A general result for stationary sequences}
\label{parproba}

Before stating the maximal inequality proved in this paper, we
shall introduce some definitions and notations.

\begin{defn} \label{defquant}
 For any nonnegative random variable
$X$, define the ``upper tail'' quantile function $Q_X $ by $
Q_X (u) = \inf \left \{  t \geq 0 : \p \left(X >t \right) \leq
u\right \} $.
\end{defn}
This function is defined on $[0,1]$, non-increasing, right
continuous, and has the same distribution as $X$. This makes it
very convenient to express the tail properties of $X$ using
$Q_X$. For instance, for $0<\varepsilon<1$, if the distribution
of $X$ has no atom at $Q_X(\varepsilon)$, then
  \begin{equation*}
  \E ( X \I_{X > Q_X(\varepsilon)})=\sup_{\p(A)\leq \varepsilon} \E(X \I_A) = \int_0^\varepsilon Q_X(u) du\, .
  \end{equation*}

\begin{defn}
\label{defclosedenv} Let $\mu$ be the probability distribution of a
random variable $X$. If $Q$ is an integrable quantile function,
let $\tMon(Q, \mu)$ be the set of functions $g$ which are
monotonic on some open interval of ${\mathbb R}$ and null
elsewhere and such that $Q_{|g(X)|} \leq Q$. Let $\tMonm(Q,
\mu)$ be the closure in ${\mathbb L}^1(\mu)$ of the set of
functions which can be written as $\sum_{\ell=1}^{L} a_\ell
f_\ell$, where $\sum_{\ell=1}^{L} |a_\ell| \leq 1$ and $f_\ell$
belongs to $\tMon(Q, \mu)$.
\end{defn}
This definition is similar to Definition \ref{defMon}, we only
use quantile functions instead of tail functions. There is in
fact a complete equivalence between these two points of view:
if $Q$ is a quantile function and $H$ is its c\`{a}dl\`{a}g inverse,
then $\tMon(Q, \mu)=\Mon(H,\mu)$ and
$\tMonm(Q,\mu)=\Monm(H,\mu)$.

Let now  $(\Omega ,\mathcal{A}, \p)$ be a probability space, and let
$\theta :\Omega \mapsto \Omega $ be a bijective bimeasurable
transformation preserving the probability ${\p}$. Let
${\mathcal M}_0$ be a sub-$\sigma$-algebra of $\mathcal{A}$
satisfying ${\mathcal M}_0 \subseteq \theta^{-1}({\mathcal
M}_0)$.

\begin{defn}
For any integrable random variable $X$, let us write
$X^{(0)}=X- \E(X)$.
For any random variable $Y=(Y_1, \cdots, Y_k)$ with values in
${\mathbb R}^k$ and any $\sigma$-algebra $\F$, let
\[
\alpha(\F, Y)= \sup_{(x_1, \ldots , x_k) \in {\mathbb R}^k}
\left \| \E \Big(\prod_{j=1}^k (\I_{Y_j \leq x_j})^{(0)} \Big | \F \Big)^{(0)} \right\|_1.
\]
For a sequence ${\bf Y}=(Y_i)_{i \in {\mathbb Z}}$, where
$Y_i=Y_0 \circ \theta^i$ and $Y_0$ is an $\M_0$-measurable and
real-valued random variable, let \begin{equation}
\label{defalpha} \alpha_{k, {\bf Y}}(n) = \max_{1 \leq l \leq
k} \ \sup_{ n\leq i_1\leq \ldots \leq i_l} \alpha(\M_0,
(Y_{i_1}, \ldots, Y_{i_l})) .
\end{equation}
\end{defn}

The following maximal inequality is crucial for the
proof of Theorem \ref{ASthm} below.

\begin{prop} \label{FN}
Let  $X_i = f(Y_i) - \E ( f(Y_i))$, where $Y_i=Y_0 \circ
\theta^i$ and $f$ belongs to $\tMonm(Q, P_{Y_0})$ (here,
$P_{Y_0}$ denotes the distribution of $Y_0$, and $Q$ is a square
integrable quantile function). Define the coefficients
$\alpha_{1, {\bf Y}}(n) $ and $\alpha_{2, {\bf Y}}(n) $ as in
\eqref{defalpha}. Let $n\in\N$. Let
  \begin{equation*}
  R(u)=(\min\{q\in \N \tq \alpha_{2,{\bf Y}}(q)\leq u\} \wedge n) Q(u)
  \text{ and }
  S(v)=R^{-1}(v)=\inf\{u\in [0,1] \tq R(u)\leq v\}\,.
  \end{equation*}
Let $S_n = \sum_{k=1}^n X_k$. For any $x>0$, $r \geq 1$, and
$s_n>0$ with $s_n^2 \geq 4n \sum_{i=0}^{n-1} \int_0^{\alpha_{1,
{\bf Y}} (i)}Q^2 (u) du$, one has
  \begin{equation}
  \label{eqmainmax}
  \p \Big( \sup_{ 1 \leq k \leq n}  |
  S_k | \geq 5x \Big)  \leq  4 \exp \left ( - \frac{r^2 s_n^2}{8x^2}
  h \left( \frac{ 2 x^2 }{ r  s_n^2}\right) \right
  )+  n \Big ( \frac{6}{x} +
  \frac{16x }{ r s_n^2} \Big ) \int_0^{S(x/r)}Q(u) du \, ,
  \end{equation}
where $h(u):= (1+u) \ln (1+u) -u$.
\end{prop}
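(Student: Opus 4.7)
The plan is to follow the block-truncation and martingale Bennett scheme of Merlev\`{e}de (2008). By the $L^1$-density in the definition of $\tMonm(Q,P_{Y_0})$, and since the right-hand side of \eqref{eqmainmax} depends on $f$ only through the common envelope $Q$ and the coefficients of $(Y_i)$, it suffices to establish the bound for $f\in\tMon(Q,P_{Y_0})$, i.e.\ a function monotonic on a single open interval. Fix such an $f$, and set
\begin{equation*}
u_0:=S(x/r), \qquad M:=Q(u_0), \qquad q:=\min\{m\in\N:\alpha_{2,{\bf Y}}(m)\leq u_0\}\wedge n.
\end{equation*}
By the very definition of $R$ and $S$, one has $qM\leq R(u_0)\leq x/r$; this is the crucial $L^\infty$-bound on a truncated block sum of length $q$ that will feed Bennett's inequality with scale $x/r$.

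\medskip

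Decompose $f=f^{(1)}+f^{(2)}$ with $f^{(1)}:=(f\wedge M)\vee(-M)$ and $X_i^{(j)}:=f^{(j)}(Y_i)-\E f^{(j)}(Y_i)$. The tail contribution is easy: since $f$ is monotone, $\E|f^{(2)}(Y_0)|\leq\int_0^{u_0}Q(u)\,du$, and Doob's maximal inequality for the stationary sum $\sum X_i^{(2)}$ yields $\p\bigl(\max_k|\sum_{i\leq k}X_i^{(2)}|\geq 2x\bigr)\leq (3n/x)\int_0^{u_0}Q\,du$, which accounts for the $6/x$ factor in the non-exponential term of \eqref{eqmainmax}.

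\medskip

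For the bounded part, partition $\{1,\ldots,n\}$ into consecutive blocks $B_1,\ldots,B_L$ of length $q$, form the block sums $U_j:=\sum_{i\in B_j}X_i^{(1)}$, and introduce the martingale differences $D_j:=U_j-\E(U_j\mid\mathcal{G}_{j-1})$ with respect to the filtration $\mathcal{G}_j:=\theta^{-jq}\M_0$. By construction $|U_j|\leq 2qM\leq 2x/r$, so $|D_j|\leq 4x/r$, while Rio's covariance inequality for monotone transforms of $(Y_i)$ gives
\begin{equation*}
\Var\Bigl(\sum_{i=1}^n X_i^{(1)}\Bigr)\leq n\sum_{i=0}^{n-1}\int_0^{\alpha_{1,{\bf Y}}(i)}Q^2(u)\,du\leq s_n^2/4.
\end{equation*}
A Freedman-type martingale Bennett inequality applied to $\sum D_j$ with these bounded increments and quadratic variation $\leq s_n^2$, combined with Doob's maximal inequality, produces the exponential term in \eqref{eqmainmax}. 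The $L^1$-error $\|\sum_j\E(U_j\mid\mathcal{G}_{j-1})\|_1$, controlled by a Dedecker--Rio bound in terms of $\alpha_{2,{\bf Y}}(q)\leq u_0$ and $\int_0^{u_0}Q\,du$, contributes the remaining $(16x/(rs_n^2))\int_0^{u_0}Q\,du$ piece via Markov. Finally, the union bound $\p(\sup|S_k|\geq 5x)\leq\p(\sup|S_k^{(1)}|\geq 3x)+\p(\sup|S_k^{(2)}|\geq 2x)$ combines the three estimates into \eqref{eqmainmax}.

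\medskip

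The principal obstacle is the delicate joint calibration of the three parameters $(r,x,u_0)$: the truncation level $M$ and the block length $q$ must be coupled through $u_0=S(x/r)$ so that the block $L^\infty$-bound coincides exactly with $x/r$ (yielding Bennett's exponent with the correct $r$-dependence) while the martingale-approximation error is simultaneously absorbed into the second, non-exponential tail term. A secondary technicality is verifying Rio's covariance inequality for the truncated monotone $f^{(1)}$ and precisely bookkeeping the $L^1$ cost of the martingale approximation in terms of $\alpha_{2,{\bf Y}}$; fortunately symmetric truncation preserves monotonicity and only decreases the relevant quantile function, so Rio's bound applies with the same envelope $Q$.
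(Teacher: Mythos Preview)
Your outline captures the right block-truncation/Bennett architecture, but two steps are genuinely broken as written.

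First, the martingale approximation: you set $D_j=U_j-\E(U_j\mid\mathcal G_{j-1})$ with $\mathcal G_{j-1}=\M_{(j-1)q}$. The gap between this $\sigma$-algebra and the first variable $Y_{(j-1)q+1}$ in $U_j$ is a single time step, so stationarity gives $\|\E(U_j\mid\mathcal G_{j-1})\|_1\leq\sum_{i=1}^q\|\E(X'_i\mid\M_0)\|_1$, controlled by $\alpha_{1,{\bf Y}}(1),\dots,\alpha_{1,{\bf Y}}(q)$ rather than uniformly by $\alpha_{1,{\bf Y}}(q)$. The small-$i$ terms do not vanish as $q\to\infty$, so you cannot bound this by $Cn\int_0^{u_0}Q$. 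The paper avoids this by conditioning \emph{two} blocks back, $\tilde U_i=U'_i-\E(U'_i\mid\M_{(i-2)q})$, and then splitting into even and odd indices to recover two genuine martingale-difference sequences; every relevant gap is then at least $q$.

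Second, you misplace the role of $\alpha_{2,{\bf Y}}$. Pinelis/Freedman requires a bound on the \emph{conditional} quadratic variation $\sum_j\E(\tilde U_j^2\mid\mathcal F^U_{j-2})$, not merely on $\Var(\sum X_i^{(1)})$. The paper bounds $\E(\sum\E((U'_j)^2\mid\cdot))\leq s_n^2$ via Rio's covariance inequality (this uses $\alpha_{1,{\bf Y}}$), and then controls the probability that the conditional sum exceeds $2s_n^2$ by Markov applied to $\sum_j\|\E((U'_j)^2\mid\M_{(j-2)q})-\E((U'_j)^2)\|_1$; the latter is where the Dedecker--Rio bound with $\alpha_{2,{\bf Y}}(q)$ enters, and it produces the $16x/(rs_n^2)$ term. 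Your sketch skips the conditional-variance step entirely and instead attributes that term to the $L^1$ martingale residual, which is incorrect (that residual contributes to the $6/x$ part, via $\alpha_{1,{\bf Y}}(q)$). A minor additional point: the reduction to a single monotone $f$ is not free, since the bound for $\sum a_\ell f_\ell$ does not follow from the bound for each $f_\ell$ without an extra factor $L$; the paper keeps the convex combination throughout and uses $\sum|a_\ell|\leq 1$ inside the truncation and covariance estimates.
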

\begin{rmkm} Note that a similar bound for $\alpha$-mixing sequences
in the sense of Rosenblatt (1956) has been proved in Merlev\`{e}de (2008, Theorem 1).
Since $h(u)\geq u \ln (1+u) /2$, under the notation and
assumptions of the above theorem, we get that for any $x
>0$ and $r \geq 1$,
\begin{eqnarray} \label{r1FN}
 \p \Big( \sup_{ 1 \leq k \leq n}  |
S_k | \geq 5x \Big) & \leq & 4 \left ( 1 + \frac{ 2 x^2  }{ r
s_n^2} \right )^{-r/8}+  n \Big ( \frac{6}{x} + \frac{16x }{ r
s_n^2} \Big ) \int_0^{S(x/r)}Q (u) du \, .
\end{eqnarray}
\end{rmkm}

Theorem \ref{ASmap} is in fact a corollary of the following
theorem, which gives both a precise control of the tail of the
partial sums by applying Proposition \ref{FN}, and a strong
invariance principle for the partial sums.

Let ${\mathcal I}$ be the $\sigma$-algebra of all $\theta$-invariant sets.
The map $\theta$ is ${\mathbb P}$-ergodic if each element of ${\mathcal I}$ has  measure $0$ or $1$.

\begin{thm} \label{ASthm}
Let $Y_i$, $X_i$ and $S_n$ be as in Proposition \ref{FN}. Assume that the following
condition is satisfied:
\begin{equation} \label{condQ} \sum_{k \geq 1} \int_0^{\alpha_{2, {\bf Y}}(k)} Q^2(u) du
< \infty \, . \end{equation} Then the series $\sigma^2=\sum_{k
\in {\mathbb Z}}\Cov(X_0, X_k)$ converges absolutely to some
nonnegative number $\sigma^2$, and
\begin{eqnarray} \label{r1app2LIL}
\sum_{  n >0}  \frac 1n {\mathbb P} \Big ( \sup_{k \in [1,n]} |S_k |
\geq A \sqrt{2 n \ln (\ln (n))} \Big ) < \infty   ,  \text{with} \
A = 20 \Big (\sum_{k \geq 0} \int_0^{\alpha_{1, {\bf Y}}(k)} Q^2(u) du \Big
)^{1/2}\, .
\end{eqnarray}
Assume moreover that $\theta$ is ${\mathbb P}$-ergodic. Then, enlarging  $\Omega$ if necessary, there exists a sequence
$(Z_i)_{i \geq 0}$ of i.i.d.~gaussian random variables with
mean zero and variance $\sigma^2$ such that
\begin{eqnarray} \label{asr1}
  \Big|S_n - \sum_{i=1}^n Z_i \Big| =o(\sqrt {n \ln(\ln( n))}) , \
  \text{almost surely.}
\end{eqnarray}
\end{thm}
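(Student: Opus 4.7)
}
The plan is to treat the three claims in succession: absolute convergence of $\sigma^2$, the LIL-type tail bound, and the almost sure invariance principle.

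\smallskip
\emph{Absolute convergence of $\sigma^2$.} First, I would establish the Rio-type covariance bound
\[
|\Cov(X_0, X_k)| \leq 4\int_0^{\alpha_{1,{\bf Y}}(k)} Q^2(u)\, du,
\]
valid for $X_i = f(Y_i) - \E(f(Y_i))$ with $f\in \tMonm(Q, P_{Y_0})$. For a single monotonic $f_\ell$ this is the standard inequality of Rio (applied to the $\sigma$-algebra generated by $Y_0$ and the variable $f_\ell(Y_k)$), and the extension to $\tMonm(Q,\mu)$ is obtained by linearity and an $L^1$ approximation. Since $\alpha_{1,{\bf Y}}(k)\leq \alpha_{2,{\bf Y}}(k)$, summing and using hypothesis \eqref{condQ} gives $\sum_{k\in\Z}|\Cov(X_0,X_k)|<\infty$, which yields the absolute convergence of $\sigma^2$.

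\smallskip
\emph{The tail bound \eqref{r1app2LIL}.} Set $\sigma_0^2 = \sum_{k\geq 0}\int_0^{\alpha_{1,{\bf Y}}(k)} Q^2(u)\,du$ so that $A=20\sigma_0$, and apply Proposition \ref{FN} with $s_n^2 = 4n\sigma_0^2$ (a legitimate choice since $\sum_{i=0}^{n-1}\int_0^{\alpha_{1,{\bf Y}}(i)} Q^2 \leq \sigma_0^2$), $x=(A/5)\sqrt{2n\ln\ln n}=4\sigma_0\sqrt{2n\ln\ln n}$, and $r=r_n$ a large integer multiple of $\ln\ln n$. With these choices the ratio $2x^2/(rs_n^2) = 16\ln\ln n/r$ is a small positive constant, so using the form $h(u)\sim u^2/2$ for small $u$ the exponential term in \eqref{eqmainmax} decays like $(\ln n)^{-\beta}$ for $\beta$ arbitrarily large. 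For the second term, the prefactor $n(6/x+16x/(rs_n^2))$ is $O(\sqrt{n/\ln\ln n})$, and the defining inequality $R(S(x/r))\leq x/r$ combined with Cauchy--Schwarz yields $\int_0^{S(x/r)}Q(u)\,du = o((\ln n)^{-2}/\sqrt{n})$ by exploiting \eqref{condQ} and the definition of $R$; a dyadic-block argument then implies convergence of $\sum_n n^{-1}\p(\sup_{k\leq n}|S_k|\geq A\sqrt{2n\ln\ln n})$.

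\smallskip
\emph{The strong invariance principle.} Under ergodicity I would pass through a martingale approximation. Set $\M_n = \theta^{-n}\M_0$, define
\[
g = \sum_{k\geq 0}\bigl(\E(X_k\mid \M_0) - \E(X_k)\bigr),
\]
and let $D_0 = X_0 + g - g\circ\theta$. The $L^2$-convergence of the defining series of $g$ follows from the bound $\|\E(X_k\mid \M_0)\|_2^2 \leq 4\int_0^{\alpha_{1,{\bf Y}}(k)} Q^2$ combined with \eqref{condQ}, giving a stationary ergodic sequence of martingale differences $D_i = D_0\circ\theta^i$ with variance $\sigma^2$. The remainder $S_n - M_n = g\circ\theta - g\circ\theta^{n+1}$ with $M_n = \sum_{i=1}^n D_i$ is $O(1)$ in $L^2$, and the part already proved (applied to the Cauchy difference) gives $S_n - M_n = o(\sqrt{n\ln\ln n})$ a.s. Applying the almost sure invariance principle of Voln\'y and Samek (2000) to the stationary ergodic martingale $(M_n)$ produces the desired i.i.d.\ Gaussian sequence $(Z_i)$ with $M_n - \sum_{i=1}^n Z_i = o(\sqrt{n\ln\ln n})$ a.s., and combining the two approximations yields \eqref{asr1}.

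\smallskip
\emph{Main obstacle.} The delicate step is the control of the second term in \eqref{eqmainmax}, because it requires tuning $r$ against $x$ so that both the exponential term and the term involving $\int_0^{S(x/r)} Q$ are simultaneously $O((\ln n)^{-1-\varepsilon})$, and the proper bookkeeping for $S(x/r)$ must draw on all the information in \eqref{condQ} (rather than just $\int Q^2<\infty$). Once this is done the LIL is essentially bookkeeping, and the ASIP reduces to the already established martingale ASIP of Voln\'y--Samek via a soft coboundary argument.
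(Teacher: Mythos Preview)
Your treatment of the absolute convergence of $\sigma^2$ and of the tail bound \eqref{r1app2LIL} is essentially the paper's argument (the paper also applies Proposition~\ref{FN} with $r_n$ a constant multiple of $\ln\ln n$ and $s_n^2=4n\sigma_0^2$, then quotes Rio (2000), Theorem~6.4, for the summability of the $\int_0^{S(x_n/r_n)}Q$ term). One minor inaccuracy: with $r=c\ln\ln n$ the exponent in the first term of \eqref{eqmainmax} is a \emph{fixed} multiple of $\ln\ln n$, so the exponential term behaves like $(\ln n)^{-\beta}$ for a specific $\beta>1$, not for $\beta$ arbitrarily large; this is still enough for summability of $\sum n^{-1}(\ln n)^{-\beta}$.

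The real gap is in your argument for the strong invariance principle. You claim an $L^2$ martingale--coboundary decomposition via $g=\sum_{k\ge 0}\E(X_k\mid\M_0)$, justified by ``$\|\E(X_k\mid\M_0)\|_2^2\le 4\int_0^{\alpha_{1,{\bf Y}}(k)}Q^2$''. This inequality is not a standard consequence of Rio's covariance bound (which gives only $\|\E(X_k\mid\M_0)\|_1\le 4\int_0^{\alpha_{1,{\bf Y}}(k)}Q$), and even if it held, the hypothesis \eqref{condQ} would yield $\sum_k\|\E(X_k\mid\M_0)\|_2^2<\infty$, which does \emph{not} imply $L^2$ convergence of $\sum_k\E(X_k\mid\M_0)$. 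In short, under \eqref{condQ} alone there is no reason for the coboundary $g$ to lie in $L^2$. You also misidentify the role of Voln\'y--Samek: their contribution here is not a martingale ASIP but a zero--one dichotomy for stationary coboundaries.

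The paper proceeds differently, and the difference is essential. It uses Gordin's criterion (via $\sum_k\|\E(X_k\mid\M_0)\|_1<\infty$, which \emph{does} follow from \eqref{condQ}, together with $\liminf n^{-1/2}\E|S_n|<\infty$) to obtain $X_0=D_0+Z_0-Z_0\circ\theta$ with $D_0\in L^2$ a martingale difference but only $Z_0\in L^1$. The martingale ASIP for $M_n=\sum D_i$ comes from Berger (1990). The delicate point is that $Z_{n+1}=Z_0\circ\theta^{n+1}$ is merely $L^1$, so $Z_{n+1}=o(\sqrt{n\ln\ln n})$ a.s.\ cannot be read off directly. Here the Voln\'y--Samek lemma gives the dichotomy: either $Z_n=o(\sqrt{n\ln\ln n})$ a.s.\ or $\limsup|Z_n|/\sqrt{n\ln\ln n}=\infty$ a.s. The second alternative is ruled out precisely because $S_n=M_n+Z_1-Z_{n+1}$, the martingale $M_n$ obeys the LIL, and $S_n$ obeys the bounded LIL you have \emph{just proved} in \eqref{r1app2LIL}. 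Thus the first part of the theorem is not only a goal but an indispensable ingredient in the proof of the second part; your outline does not use it in this way.
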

\begin{rmkm} The  strong invariance principle for $\alpha$-mixing sequences (in the sense
of Rosenblatt (1956))  given in  Rio (1995) Theorem 2,
 can be easily deduced from \eqref{asr1}. Note that the
optimality of Rio's result is discussed in Theorem 3 of his
paper.
\end{rmkm}

\subsection{Dependence coefficients for intermittent maps}
\label{pardyn}

Let $\theta$ be the shift operator from ${\mathbb R}^{\mathbb
Z}$ to ${\mathbb R}^{\mathbb Z}$ defined by
$(\theta(x))_i=x_{i+1}$, and let $\pi_i$ be the projection from
${\mathbb R}^{\mathbb Z}$ to ${\mathbb R}$ defined by
$\pi_i(x)=x_i$. Let ${\bf Y}=(Y_i)_{i\geq 0}$ be a stationary
real-valued Markov chain with transition kernel $K$ and
invariant measure $\nu$. By Kolmogorov's extension theorem,
there exists a shift-invariant probability ${\mathbb P}$ on
$({\mathbb R}^{\mathbb Z}, ({\mathcal B}({\mathbb R}))^{\mathbb
Z})$, such that $\pi=(\pi_i)_{i\geq 0}$ is distributed as ${\bf
Y}$. Let ${\mathcal M}_0= \sigma (\pi_i, i \leq 0)$. We define
the coefficient $\alpha_{k,{\bf Y}}(n)$ of the chain
$(Y_i)_{i\geq 0}$ {\it via} its extension $(\pi_i)_{i\in
{\mathbb Z}}$: $ \alpha_{k,{\bf Y}}(n)=\alpha_{k,{\bf
\pi}}(n)$.

Note that these coefficients may be written in terms of the
kernel $K$ as follows. Let $f^{(0)}=f-\nu(f)$. For any
non-negative integers $n_1, n_2, \ldots , n_k$, and any bounded
measurable functions $f_1, f_2, \ldots, f_k$, define
  \begin{equation*}
  K^{(0)(n_1, n_2, \ldots, n_k)}(f_1, f_2, \ldots, f_k)=\big(K^{n_1}(f_1
  K^{n_2}(f_2 K^{n_3}(f_3\cdots K^{n_{k-1}}(f_{k-1}K^{n_k}(f_k))\cdots
  )))\big)^{(0)}\, .
  \end{equation*}
Let $\BV_1$ be space of bounded variation functions $f$ such
that $\|df\|\leq 1$, where $\|df\|$ is  the variation norm on $\R$ of the measure
$df$. We have
  \begin{equation}
  \label{defequiv}
  \alpha_{k, {\bf Y}}(n)=\sup_{1\leq l \leq k}\sup_{n_1\geq n, n_2\geq 0,
  \ldots n_l\geq 0}\sup_{f_1, \ldots, f_l \in {\BV_1}}
  \nu\big(|K^{(0)(n_1, n_2, \ldots, n_l)}(f^{(0)}_{1}, f^{(0)}_{2}, \ldots,
  f^{(0)}_{l})|\big)\, .
  \end{equation}

Let us now fix a GPM map $T$ of parameter $\gamma\in (0,1)$.
Denote by $\nu$ its absolutely continuous invariant probability
measure, and by $K$ its Perron-Frobenius operator with respect
to $\nu$. Let ${\bf Y}=(Y_i)_{i\geq 0}$ be a stationary Markov
chain with invariant measure $\nu$ and transition kernel $K$.

The following proposition shows that the iterates of $K$ on
$\BV$ are uniformly bounded.

\begin{prop}\label{propvar}
There exists  $C>0$, not depending on $n$, such that for any
$\BV$ function $f$, $\|dK^n (f)\| \leq C \|df\|$.
\end{prop}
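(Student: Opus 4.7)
The plan is to reduce to the first-return induced map on a set bounded away from the neutral fixed point, where a standard Lasota--Yorke inequality applies, and then to transfer the resulting uniform BV bound back to the iterates of $K$. Concretely, I would set $Y=(y_1,1]$ and, for $x\in Y$, let $\tau(x)=\inf\{n\ge 1\tq T^nx\in Y\}$; the induced map $\tilde T=T^{\tau}:Y\to Y$ is piecewise $C^2$, uniformly expanding, and has countably many monotonicity branches $\{\tau=n\}_{n\ge 1}$. Lemma~5 of Young (1999) supplies uniformly bounded distortion on all branches of every iterate $\tilde T^n$, which is the key input.

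Next I would establish the Lasota--Yorke inequality for the Perron--Frobenius operator $\tilde K$ of $\tilde T$ with respect to the induced invariant measure $\tilde\nu=\nu|_Y/\nu(Y)$: there exist $\lambda\in(0,1)$ and $C_1>0$ such that
\[
\|d\tilde K g\|\le\lambda\,\|dg\|+C_1\,\tilde\nu(|g|)\qquad\text{for all } g\in\BV(Y).
\]
The proof is routine: split $\tilde K g$ as a sum over inverse branches, bound the variation of each summand by the variation of $g$ on the image of the branch times a factor $\le 1/\inf|\tilde T'|<1$, and absorb boundary contributions using the standard embedding $\|g\|_\infty\lesssim \|dg\|+\tilde\nu(|g|)$. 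Iteration then yields $\|d\tilde K^n g\|\le C_2(\|dg\|+\tilde\nu(|g|))$ uniformly in $n$.

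To pass from $\tilde K$ back to $K$, I would decompose orbits by their visits to $Y$: any preimage $y\in T^{-n}(x)$ of $x\in[0,1]$ under $T^n$ performs some number of excursions through $[0,y_1]$ separated by returns to $Y$, and grouping the preimages accordingly writes $K^n f$ as a sum over tuples of excursion lengths. Each summand is a composition of iterates of $\tilde K$ (on $Y$) with ``excursion weights'' built from the inverse branches of powers of $T|_{[0,y_1]}$; the uniform distortion control for $\tilde T^n$ together with the Lasota--Yorke bound above lets me estimate the variation of each summand in terms of $\|df\|$. Summing over the number and lengths of the excursions then gives $\|dK^nf\|\le C\|df\|$ with $C$ independent of $n$.

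The main obstacle is that $T$ itself is not uniformly expanding on $[0,y_1]$, so any direct Lasota--Yorke attack on $K$ fails: individual inverse branches of $T^n$ reaching near $0$ accumulate variation whose bound grows with $n$. The whole point of the inducing step is to trade this per-branch growth against a smallness estimate on the measure of the set of points executing long excursions, which is precisely what the uniform distortion of $\tilde T^n$ provides. Carrying out the corresponding bookkeeping, so that the combinatorial sum over excursion patterns converges to a constant independent of $n$, is where the bulk of the technical work lies.
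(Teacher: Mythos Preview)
Your starting point---induce on a set bounded away from $0$, get a Lasota--Yorke inequality there, then decompose $K^n$ by excursions---is exactly the paper's strategy in spirit. But the sketch stops precisely where the real obstacle is, and the tools you name are not enough to get past it.

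The Lasota--Yorke inequality gives you $\|d\tilde K^m g\|\le C(\|dg\|+\tilde\nu(|g|))$ \emph{uniformly} in $m$, not any decay. When you decompose $K^n$ by first and last visits to the inducing set $Y$, you get (in the paper's notation) $K^nf=\sum_{a+k+b=n}A_aT_kB_bf+C_nf$, where $A_a$, $B_b$ handle the initial and final excursions into $[0,z_1]$, $T_k$ is the part of the walk that starts and ends in $Y$, and $C_n$ is the piece that never enters $Y$ at all. With only a uniform bound on $T_k$, the sum over $a$ diverges logarithmically: the paper shows $\Var(A_a\I_Y)\asymp 1/(a+1)$, and $\sum_{a\le n}1/(a+1)$ is not bounded. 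What actually makes the sum converge is a \emph{renewal decomposition} $T_k=\Pi+E_k$ with $\Var(E_k)\lesssim k^{-(1-\gamma)/\gamma}$ (obtained via operator renewal theory, Sarig/Gou\"ezel), together with the cancellation $\sum_{b\ge 0}\nu(B_bf)=\nu(f)=0$ for the rank-one part $\Pi$. Neither ingredient is visible in your outline.

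Two further points you have not addressed. First, the ``excursion weights'' you mention are the functions $\I_{[0,z_1]}(v_0^n)'h(v_0^n\cdot)/h$, and their BV norm does \emph{not} decay (the paper's Lemma~\ref{lem_controleCn} shows it is merely bounded); the smallness is in $L^1$, not in variation, so ``trading per-branch growth against measure smallness'' does not control the BV sum. Getting the precise BV decay rates $\Var(A_n\cdot)\lesssim 1/n$ and $\Var(B_n\cdot)\lesssim 1/n^{1/\gamma}$ requires a detailed analysis of the density $h$ near $0$ (the paper's Lemma~\ref{lem_densite_h}). Second, the term $C_n$---orbits that never reach $Y$---is untouched by any induced-map argument and needs its own proof; the paper handles it by the identity \eqref{eqastuce}, which rewrites $1-(v_0^n)'h(v_0^n\cdot)/h$ as a telescoping sum to tame the variation near $0$.
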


The following covariance inequality implies an estimate on
$\alpha_{1, {\bf Y}}$.

\begin{prop}
\label{propdecaycorr} There exists $B>0$ such that, for any bounded function $\varphi$, any
$\BV$ function $f$ and any $n>0$
  \begin{equation}
  \label{controleiter}
  |\nu(\varphi \circ T^n \cdot (f-\nu(f)))|
  \leq \frac{B}{n^{(1-\gamma)/\gamma}}\|df\| \|\varphi\|_\infty \, .
  \end{equation}
\end{prop}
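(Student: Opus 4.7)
The plan is to pass the problem to the transfer operator $K$ and show that $K^n$ contracts the mean-zero subspace of $\BV$ into $L^1(\nu)$ at the polynomial rate $n^{-(1-\gamma)/\gamma}$. By the Perron--Frobenius duality $\nu(f \cdot g \circ T)=\nu(K(f)\, g)$ iterated $n$ times,
\[
\nu(\varphi \circ T^n \cdot (f-\nu(f)))=\nu\bigl(\varphi \cdot K^n(f-\nu(f))\bigr),
\]
so taking $\|\varphi\|_\infty$ out reduces \eqref{controleiter} to the estimate
\[
\|K^n g\|_{L^1(\nu)} \leq B\, n^{-(1-\gamma)/\gamma} \|dg\|
\qquad\text{whenever } g\in\BV, \ \nu(g)=0.
\]
By Proposition \ref{propvar} the function $K^n g$ remains uniformly bounded in $\BV$, so only the $L^1$ decay has to be established.

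The next step is to induce on an interval $A$ bounded away from the neutral fixed point (for example $A=[y_1,1]$). Let $\varphi_A$ be the first-return time to $A$ and $T_A=T^{\varphi_A}$ the induced map. Using the distortion bounds provided by Lemma~5 in Young (1999) (referenced in the footnote to Zweimüller's theorem), $T_A$ is uniformly expanding with bounded distortion, and its transfer operator $K_A$ (with respect to the restriction of $\nu$ to $A$) has a spectral gap on $\BV(A)$: $K_A=\Pi_A+R_A$ with $\Pi_A$ a rank-one projection onto constants and the spectral radius of $R_A$ strictly smaller than $1$. The polynomial rate then enters through the tail of the return time. Near $0$, the recursion $T_{(0)}(x)=x+c' x^{1+\gamma}(1+o(1))$ gives $T_{(0)}^{-n}(y_1)\sim c'' n^{-1/\gamma}$, whence
\[
\nu(\varphi_A>n) \leq C n^{-1/\gamma}
\qquad\text{and}\qquad
\sum_{k>n}\nu(\varphi_A>k) \leq C'\, n^{-(1-\gamma)/\gamma}.
\]

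To combine the spectral gap of the induced system with the return-time tail, I would use the Young tower $\hat T$ built above $(A, T_A, \varphi_A)$. The transfer operator $\hat K$ of $\hat T$ acts on a $\BV$-type space on the tower, admits $1$ as a simple, isolated eigenvalue, and the renewal analysis of Sarig / Gouëzel (2004b, 2007) yields a decomposition $\hat K^n=\hat\Pi+\hat R_n$ in which $\|\hat R_n\|$ (from $\BV$ on the tower to $L^1$) decays at rate $n^{-(1-\gamma)/\gamma}$ -- this is exactly the rate inherited from $\sum_{k>n}\nu(\varphi_A>k)$. Projecting the identity $\hat K^n=\hat\Pi+\hat R_n$ down to $[0,1]$ and applying it to a $\BV$ function $g$ of mean zero lifted to the tower (the $\BV$ norm is preserved up to a uniform constant because the lift is the indicator-times-$g$ on the ground floor, controlled via Proposition~\ref{propvar}) eliminates the projection term and gives $\|K^n g\|_{L^1(\nu)} \leq C n^{-(1-\gamma)/\gamma}\|dg\|$, as required.

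The main obstacle is that GPM maps are not assumed to carry a Markov partition, so the classical Young-tower construction for piecewise Hölder observables does not apply directly; the whole analysis must be carried out in the $\BV$ setting, and one has to track the variation norm carefully through the inducing step and the tower map. This is precisely the point where the approach of Gouëzel (2007) is invoked, with Proposition~\ref{propvar} providing the crucial uniform bound $\|dK^n g\|\le C\|dg\|$ that keeps the tower transfer operator quasi-compact on a $\BV$-type space without recourse to a Markov structure.
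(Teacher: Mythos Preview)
Your reduction to the estimate $\|K^n g\|_{L^1(\nu)}\le B\,n^{-(1-\gamma)/\gamma}\|dg\|$ for $\nu(g)=0$ is correct, and the ingredients you identify---spectral gap for the induced system, the return-time tail $\nu(\varphi_A>n)\le C n^{-1/\gamma}$, and the Sarig/Gou\"ezel renewal machinery---are exactly the ones the paper uses. However, the Young-tower implementation you sketch has a genuine gap at the lifting step.

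If $\pi:\Delta\to[0,1]$ is the tower projection, the identity that relates correlations on $[0,1]$ to correlations on $\Delta$ requires the lift $g\circ\pi$, not the ground-floor extension $\hat g=g\cdot\I_{\text{base}}$. With the ground-floor lift, $\hat K^n\hat g$ does \emph{not} project to $K^n g$: already for $n=1$, $\hat K\hat g$ at a base point only picks up preimages with return time $1$, whereas $K g$ sees all preimages. The correct lift $g\circ\pi$, on the other hand, equals $g\circ T^k$ on level $k$, and the variation of $g\circ T^k$ typically grows without bound in $k$ (composition with an expanding map multiplies the number of monotonicity branches). Proposition~\ref{propvar} controls $\|dK^n g\|$, not $\|d(g\circ T^n)\|$, so it does not rescue the lift. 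This is precisely why the Young-tower method, which works smoothly for H\"older observables, does not transfer directly to $\BV$.

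The paper circumvents the tower entirely. It decomposes $K^n$ by first entrance and last exit from the reference set $(z_1,1]$:
\[
K^n = \sum_{a+k+b=n} A_a\, T_k\, B_b \;+\; C_n,
\]
where $C_n$ accounts for trajectories that never enter $(z_1,1]$, $A_a$ and $B_b$ handle the initial and final excursions near the neutral fixed point, and $T_k$ is the base-to-base piece. The renewal theorem of Gou\"ezel (2004b) is applied to $T_k$ alone (acting on $\BV$ of the base), giving $T_k=\Pi+E_k$ with $\Var(E_k f)\le C k^{-(1-\gamma)/\gamma}\Var(f)$. The operators $A_a$, $B_b$, $C_n$ are then estimated in $L^1(\nu)$ by explicit one-line computations (e.g.\ $\nu(|C_n f|)\le C\|f\|_\infty\,\nu([0,z_{n+1}])$), and the convolution sums are handled by an elementary lemma. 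No lift is needed; the whole argument stays on $[0,1]$.
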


Putting together the last two propositions and
\eqref{defequiv}, we obtain the following:
\begin{prop}
\label{weakalpha}
For any positive integer $k$, there exists a constant $C$ such
that, for any $n>0$,
\[
  \alpha_{k,{\bf Y}}(n) \leq
  \frac{C}{n^{(1-\gamma)/\gamma}} \, .
\]
\end{prop}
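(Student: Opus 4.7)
The plan is to combine the identity \eqref{defequiv} with the two preceding propositions. Fix $k \geq 1$, $l \in \{1, \ldots, k\}$, indices $n_1 \geq n$ and $n_2, \ldots, n_l \geq 0$, and functions $f_1, \ldots, f_l \in \BV_1$; by \eqref{defequiv} it suffices to prove
\[
\nu\bigl(|K^{(0)(n_1, \ldots, n_l)}(f_1^{(0)}, \ldots, f_l^{(0)})|\bigr) \leq C\, n^{-(1-\gamma)/\gamma}
\]
for some constant $C$ depending only on $k$. Define recursively $v_l = K^{n_l}(f_l^{(0)})$ and $v_j = K^{n_j}(f_j^{(0)} v_{j+1})$ for $2 \leq j \leq l-1$, and set $w = f_1^{(0)} v_2$; then the quantity above equals $\nu(|K^{n_1}(w) - \nu(w)|)$.

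By the duality between $L^1(\nu)$ and $L^\infty(\nu)$, and since $\nu$ is $K$-invariant,
\[
\nu(|K^{n_1}(w) - \nu(w)|) = \sup_{\|\psi\|_\infty \leq 1} \nu\bigl((\psi - \nu(\psi)) K^{n_1}(w)\bigr).
\]
Setting $\varphi = \psi - \nu(\psi)$ gives $\|\varphi\|_\infty \leq 2$ and $\nu(\varphi) = 0$, and the duality of $K$ with composition by $T$ combined with the $T$-invariance of $\nu$ yields
\[
\nu(\varphi K^{n_1}(w)) = \nu(\varphi \circ T^{n_1} \cdot w) = \nu\bigl(\varphi \circ T^{n_1} \cdot (w - \nu(w))\bigr).
\]
Proposition \ref{propdecaycorr} then bounds this by $2B\, n_1^{-(1-\gamma)/\gamma} \|dw\| \leq 2B\, n^{-(1-\gamma)/\gamma} \|dw\|$, so the problem reduces to showing that $\|dw\|$ is bounded by a constant depending only on $k$.

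To this end I would prove by descending induction on $j$, from $j = l$ down to $j = 2$, that both $\|dv_j\|$ and $\|v_j\|_\infty$ are bounded by constants depending only on $l$. The ingredients are Proposition \ref{propvar} (giving $\|dK^n g\| \leq C\|dg\|$), the product rule $\|d(fg)\| \leq \|f\|_\infty \|dg\| + \|g\|_\infty \|df\|$, the oscillation bound $\|f_j^{(0)}\|_\infty \leq \|df_j\| \leq 1$, the embedding $\|g\|_\infty \leq \|g\|_{L^1(dx)} + \|dg\|$ valid for any $\BV$ function on $[0,1]$, and the $L^1(\nu)$-contraction of $K$. The lower bound $h_\nu \geq C_0 > 0$ (which follows from $h_\nu(x)/x^{-\gamma}$ being bounded below) then converts any $L^1(\nu)$ bound into an $L^1(dx)$ bound, and adding the variation recovers an $L^\infty$ bound. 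The main obstacle is precisely that Proposition \ref{propvar} controls only the variation, not the sup-norm, of the iterates, whereas the product rule at the next level of the nesting requires an $L^\infty$ bound; the $L^1$-to-$L^\infty$ conversion just described is what closes this gap. Since $l \leq k$ is fixed, the constants inflate by a bounded factor at each of the at most $k$ levels, and the final bound $\|dw\| \leq \|f_1^{(0)}\|_\infty \|dv_2\| + \|v_2\|_\infty$ depends only on $k$.
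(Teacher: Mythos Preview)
Your approach is correct and follows the same overall route as the paper: bound the variation of the nested expression $w$ using Proposition~\ref{propvar} and the product rule, then apply Proposition~\ref{propdecaycorr} to the outermost $K^{n_1}$. The only difference is the step you flag as ``the main obstacle''. You recover $\|v_j\|_\infty$ from $\|dv_j\|$ via the BV embedding $\|g\|_\infty\leq\|g\|_{L^1(dx)}+\|dg\|$, the $L^1(\nu)$-contraction of $K$, and the lower bound on $h_\nu$; this works, but the paper bypasses it entirely by observing that $K(1)=1$ makes $K$ a Markov operator and hence an $L^\infty$-contraction, so $\|K^n g\|_\infty\leq\|g\|_\infty$ directly. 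Combined with $\|f_j^{(0)}\|_\infty\leq 1$, this gives $\|v_j\|_\infty\leq 1$ for all $j$ in one line, and the variation recursion then reads simply $\|d(f^{(0)}K^n g)\|\leq 1+C\|dg\|$, yielding $\|dw\|\leq 1+C+\cdots+C^{k-1}$ without ever invoking the density lower bound.
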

\begin{proof} Let $f \in \BV_1$ and $g \in \BV$ with $\|g\|_\infty \leq 1$. Then, applying Proposition \ref{propvar}, we obtain for any $n\geq 0$,
\begin{equation}\label{bla}
  \|d(f^{(0)}K^n(g))\|\leq \|df\|\|g\|_\infty + \|dK^n(g)\|\|f^{(0)}\|_\infty \leq 1+ C \|dg\|\, .
\end{equation}
For $f_1,\dots,f_k\in \BV_1$,  let $f=f_1^{(0)}
K^{n_2}(f_2^{(0)} K^{n_3}(f_3^{(0)}\cdots
K^{n_{k-1}}(f_{k-1}^{(0)}K^{n_k}(f_k^{(0)}))\cdots )$.
Iterating  Inequality (\ref{bla}), we obtain, for any   $n_2,\dots,n_k\geq
0$,
$
  \|df\| \leq 1+C+ C^2 + \cdots + C^{k-1}
$.
Together with the bound \eqref{defequiv} for $\alpha_{k, {\bf
Y}}(n)$, this implies that
$$
  \alpha_{k, {\bf
Y}}(n) \leq (1+C+ C^2 + \cdots + C^{k-1}) \alpha_{1, {\bf
Y}}(n) \, .
$$
Now the upper bound (\ref{controleiter}) means exactly that $\alpha_{1, {\bf
Y}}(n) \leq B n^{(\gamma-1)/\gamma}$, which concludes the proof of Proposition (\ref{weakalpha}).
\end{proof}


Proposition \ref{weakalpha} improves on the corresponding upper
bound given in Dedecker and Prieur (2009). Let us mention that
this upper bound is optimal: the lower bound $\alpha_{k,{\bf
Y}}(n) \geq C'n^{(\gamma-1)/\gamma}$ was given in
Dedecker and Prieur (2009) for Liverani-Saussol-Vaienti maps,
and is a consequence in this markovian context of the lower
bound for $\nu(\varphi \circ T^n \cdot (f-\nu(f)))$ given by
Sarig (2002), Corollary 1. Our techniques imply that this lower
bound also holds in the general setting of GPM maps.

\medskip

In the rest of the paper, we prove the previous results. First,
in Section \ref{secproba}, we prove the results of Paragraph
\ref{parproba}, which are essentially of probabilistic nature.
In Section \ref{secdyn}, we study the transfer operator of a
GPM map $T$, to prove the dynamical results of Paragraph
\ref{pardyn}. Finally, in the last section, we put together all
those results (and arguments of Dedecker and Merlev\`{e}de (2007))
to prove the main theorems of Paragraph \ref{parglob}.

In the rest of this paper, $C$ and $D$ are positive constants
that may vary from line to line.

\section{Proofs of the probabilistic results}
\label{secproba}

\subsection{Proof of Proposition \ref{FN}}
Assume first that $X_i = \sum_{\ell=1}^L a_{\ell} f_{\ell}(Y_i)
- \sum_{\ell=1}^L a_{\ell}\E(f_{\ell}(Y_i))$, with $f_\ell$
belonging to $\tMon(Q, P_{Y_0})$ and $\sum_{\ell=1}^L |a_\ell|
\leq 1$. Let $M>0$ and $g_M(x) = (x \wedge M) \vee (-M)$. For
any $i \geq 0$, we first define
\[
 X_i'=\sum_{\ell=1}^L a_{\ell} \, g_M \circ f_{\ell}(Y_i) -\sum_{\ell=1}^L a_{\ell}
  \E ( g_M \circ f_{\ell}(Y_i)) \quad \text{and} \quad  X_i''=X_i - X_i' \, .
\]

Let $S_n'= \sum_{i=1}^n X_i'$ and $S_n''= \sum_{i=1}^n X_i''$.
Let $q$ be a positive integer and for $1\leq i \leq[n/q]$,
define the random variables $U'_i = S'_{iq}- S'_{iq -q}$ and
$U''_i = S''_{iq}- S''_{iq -q}$.

Let us first show that
  \begin{equation} \label{dec1FN}
  \max_{1 \leq k \leq n } |S_k | \leq
  \max_{ 1 \leq j \leq [n/q]}  \Big|\sum_{i=1}^j U_i' \Big|
  + 2 q M + \sum_{k=1}^n |X_k''|  \, .
  \end{equation}
If the maximum of $|S_k |$ is obtained for $k=k_0$, then for
$j_{0} =[k_0/q]$,
  \begin{equation*}
  \max_{1 \leq k \leq  n } | S_k
  | \leq \Big| \sum_{i=1}^{j_0} U_i' \Big| + \sum_{i=1}^{j_0} |
  U_i'' | + \sum_{k= q j_0 + 1}^{k_0} |X'_k |+ \sum_{k= q j_0 + 1}^{k_0} |X''_k |\, .
  \end{equation*}
Since $|X'_k|\leq 2M \sum_{\ell=1}^L |a_{\ell}|  \leq 2M$, and
$\sum_{i=1}^{j_0} | U_i'' | \leq  \sum_{k=1}^{q j_0} | X''_k
|$, this concludes the proof of \eqref{dec1FN}.

\medskip

For all $i \geq 1$, let $\F^U_{i}= {\mathcal M}_{iq}$, where
${\mathcal M}_k= \theta^{-k}({\mathcal M}_0)$. We define a
sequence $(\tilde U_i)_{i \geq 1}$ by $\tilde U_i=U'_i-\E(U'_i
| \F^U_{i-2})$. The sequences $(\tilde U_{2i-1})_{i \geq 1}$
and $(\tilde U_{2i})_{i \geq 1}$ are sequences of martingale
differences with respect respectively to $(\F^U_{2i-1})$ and
$(\F^U_{2i})$. Substituting the variables $\tilde U_i$ to the
initial variables, in the inequality \eqref{dec1FN}, we derive
the following upper bound
\begin{multline} \label{dec15FN}
\max_{1 \leq k \leq  n } |S_k |\leq  2q M +
 \max_{2 \leq 2j \leq [n/q]} \Bigl | \sum_{i=1}^j \tilde U_{2i} \Bigr | +
\max_{1 \leq 2j-1 \leq [n/q]} \Bigl | \sum_{i=1}^j \tilde U_{2i
-1} \Bigr |
  + \sum_{i=1}^{ [n/q]} |  U'_{i} -\tilde U_{i} | +  \sum_{k=1}^n
| X''_k | \, .
\end{multline}
Since $\sum_{\ell=1}^L |a_{\ell}| \leq 1$, $|U'_i| \leq 2qM$
almost surely. Consequently $ |\tilde U_i | \leq 4qM$ almost
surely. Applying Proposition \ref{pinelis} of the appendix with
$y = 2s_n^2$, we derive that
  \begin{equation} \label{dec5}
  \begin{split}
  \p \Big (  \max_{2 \leq 2j \leq [n/q]} \Bigl | \sum_{i=1}^j \tilde
  U_{2i} \Bigr | \geq  x  \Big
  ) & \leq  2 \exp \left ( - \frac{s_n^2}{8(qM)^2} h \left( \frac{ 2 x q M }{  s_n^2}\right) \right ) \\
  & \ \ \ + \p \Big ( \sum_{i=1}^{[[n/q]/2]}
  \E (  \tilde U_{2i} ^2 | \F^U_{2(i-1)} ) \geq 2 s_n^2 \Big )
  \, .
  \end{split}
  \end{equation}
Since $\E (  \tilde U_{2i} ^2 | \F^U_{2(i-1)} ) \leq \E (
(U'_{2i} )^2 | \F^U_{2(i-1)} )$,
  \begin{equation}
  \label{dec6}
  \p \Big ( \sum_{i=1}^{[[n/q]/2]}
  \E (  \tilde U_{2i} ^2 | \F^U_{2(i-1)} ) \geq 2 s_n^2\Big )
  \leq \p \Big ( \sum_{i=1}^{[[n/q]/2]}
  \E ( (  U'_{2i})^2 | \F^U_{2(i-1)} ) \geq 2 s_n^2 \Big )  \, .
  \end{equation}
By stationarity
\[
\sum_{i=1}^{[[n/q]/2]} \E( (   U'_{2i})^2) = [[n/q]/2] \E (   S'_{q}
)^2 = [[n/q]/2] \sum_{|i| \leq q } (q -|i|) \E (X_0' X_{|i|}') \, .
\]
Now,
\[
\E (X_0' X_{|i|}') = \sum_{\ell=1}^L  \sum_{k=1}^L a_{\ell} a_{k}
\Cov \bigl ( g_M \circ f_{\ell}(Y_0) , g_M \circ f_{k}(Y_{|i|})
\bigr ) .
\]
Applying Theorem 1.1 in Rio (2000) and noticing that $Q_{ |g_M
\circ f_{\ell}(Y_{|i|})|} (u) \leq Q_{ |f_{\ell}(Y_{|i|})|} (u)
\leq Q(u)$, we derive that
\begin{equation*}
\big | \Cov \big ( g_M \circ f_{\ell}(Y_0) , g_M \circ
f_{k}(Y_{|i|}) \big ) \big | \leq 2 \int_0^{2\bar \alpha (g_M \circ
f_{\ell}(Y_0), g_M \circ f_{k}(Y_{|i|}) )} Q^2 (u) du \, ,
\end{equation*}
where
\[
\bar \alpha (g_M \circ f_{\ell}(Y_0), g_M \circ
f_{k}(Y_{|i|}) ) = \sup_{(s,t) \in {\mathbb R}^2} \big | \Cov (
\I_{ g_M \circ f_{\ell}(Y_0)\leq s},\I_{ g_M \circ
f_{k}(Y_{|i|})\leq t} )\big | \, .
\]
Since $g_M \circ f_{k}$ is monotonic on an interval and zero
elsewhere, it follows that $\{ g_M \circ f_k (x) \leq t\}$ is
either some interval or the complement of some interval. Hence
\[
\bar \alpha (g_M \circ f_{\ell}(Y_0), g_M \circ f_{k}(Y_{|i|})
) \leq 2 \bar \alpha (g_M \circ f_{\ell}(Y_0), Y_{|i|} ) \leq
\alpha_1 (|i|) \, .
\]
Consequently since $\sum_{\ell=1}^L |a_{\ell}| \leq 1$, we get that
\begin{equation}\label{covinerio}
\E (X_0' X_{|i|}')  \leq 2 \int_0^{2\alpha_{1,{\bf Y}} (|i|) } Q^2 (u) du \leq
4   \int_0^{\alpha_{1,{\bf Y}}(|i|) } Q^2 (u) du\, ,
\end{equation}
so that
\[
\sum_{i=1}^{[[n/q]/2]} \E ((   U'_{2i})^2) \leq 4 n
\sum_{i=0}^{q-1} \int_0^{\alpha_{1,{\bf Y}}(i)}Q^2(u)du \leq s_n^2 \, .
\]
This bound and Markov's inequality imply that
\begin{equation}
  \label{dec61}
  \p \Big ( \sum_{i=1}^{[[n/q]/2]}
  \E ( (  U'_{2i} )^2 | \F^U_{2(i-1)} ) \geq 2 s_n^2 \Big )
  \leq \frac{1}{s_n^2} \sum_{i=1}^{[[n/q]/2]} \E |
  \E ( (  U'_{2i} )^2 | \F^U_{2(i-1)} ) -
  \E ( (  U'_{2i} )^2) | \, .
\end{equation}

Obviously similar computations allow to treat the quantity
$\max_{1 \leq 2j-1 \leq [n/q]} | \sum_{i=1}^j \tilde U_{2i -1}|
$. Hence we get that
 \begin{multline*}
 \p \Big (  \max_{2 \leq 2j \leq [n/q]} \Bigl| \sum_{i=1}^j
\tilde U_{2i} \Bigr| +\max_{1 \leq 2j-1 \leq [n/q]} \Bigl|
\sum_{i=1}^j \tilde U_{2i -1} \Bigr| \geq  2x  \Big
 ) \leq   4 \exp \left ( - \frac{s_n^2}{8(qM)^2} h \left( \frac{ 2 x q M }{  s_n^2}\right) \right
 ) \\
 + \frac{1}{ s_n^2} \sum_{i=1}^{[n/q]} \E |
  \E ( ( U'_{i} )^2 | \M_{(i-2)q} ) -
   \E  ((  U'_{i} )^2 )|
 \, .
\end{multline*}
By stationarity we have
  \begin{equation}
  \label{wioumgvwmxvj}
  \begin{split}
  \sum_{i=1}^{[n/q]} \|
  \E ( ( U'_{i} )^2 | \M_{(i-2)q} ) -
  \E  ((  U'_{i} )^2) \|_1
  &\leq \frac{n}{q}\Vert \E ( ( S_q'  )^2 | \M_{-q}
  ) -
  \E  (( S_q'  )^2)\Vert_1 \\
  &\leq   \frac{n}{q} \sum_{i=q+1}^{2q}
  \sum_{j=q+1}^{2q}\Vert \E (  X_i'  X_j'  | \M_{0} ) -
  \E  (X_i'  X_j' )\Vert_1
  \, .
  \end{split}
  \end{equation}
Let us now prove that
  \begin{equation}
  \label{jqlskmdfj}
  \Vert \E (  X_i'  X_j'  | \M_{0} ) -
   \E  (X_i'  X_j' )\Vert_1 \leq 16 M^2 \alpha_{2, {\bf Y}}(q).
  \end{equation}
Setting $A:=\sign \{ \E (  X_i'  X_j'  | \M_{0} ) - \E  ( X_i'
X_j' )\}$, we have that
  \begin{multline*}
  \Vert \E (  X_i'  X_j'  | \M_{0} ) -
  \E  (X_i'  X_j' )\Vert_1
  =
  \E \Big \{ A \Big ( \E (  X_i'  X_j'  | \M_{0} ) -
  \E  (X_i'  X_j' ) \Big ) \Big \}   = \E  \big ( (A -\E A) X_i'  X_j' \big ) \\
  = \sum_{\ell=1}^L  \sum_{k=1}^L a_{\ell} a_{k} \E  \big ( (A
  -\E A) (g_M \circ f_{\ell}(Y_i) - \E g_M \circ
  f_{\ell}(Y_i))(g_M \circ f_{k}(Y_j) - \E g_M \circ
  f_{k}(Y_j))\big )
  \, .
  \end{multline*}
From Proposition 6.1 and Lemma 6.1 in Dedecker and Rio (2008), noticing that $Q_A (u)
\leq 1$ and $Q_{|g_M \circ f_{\ell}(Y_i)|} (u) \leq M$, we have that
\begin{multline*}
\vert \E  \big ( (A -\E A) (g_M \circ f_{\ell}(Y_i) - \E g_M
\circ f_{\ell}(Y_i))(g_M \circ f_{k}(Y_j) - \E g_M \circ
f_{k}(Y_j))\big )
\vert \\
   \leq 8 M^2 \bar \alpha ( A, g_M \circ f_{\ell}(Y_i), g_M \circ f_{k}(Y_j) )
 \, ,
\end{multline*}
where for real valued random variables $A,B,V$,
\[
  \bar \alpha (A, B,V) = \sup_{(s,t,u) \in {\mathbf R}^3} \big | \E ( (\I_{ A
  \leq s} - \p (A \leq s)) (\I_{ B \leq t} - \p (B \leq t)) (\I_{ V
  \leq u} - \p (V \leq u)))\big | \, .
\]
For all $i,j \geq q$,
\[
\bar \alpha ( A, g_M \circ f_{\ell}(Y_i), g_M \circ f_{k}(Y_j)
) \leq 4 \bar \alpha ( A, Y_i, Y_j )\leq 2 \alpha_{2, {\bf Y}}
(q) \, .
\]
This concludes the proof of \eqref{jqlskmdfj}. Together with
\eqref{wioumgvwmxvj}, this yields
\begin{equation} \label{p18MDA}
\sum_{i=1}^{[n/q]} \E |
  \E ( ( U'_{i} )^2 | \M_{(i-2)q} ) -
   \E  (  U'_{i} )^2 |  \leq  16 nq M^2 \alpha_{2, {\bf Y}}(q)
  \, .
\end{equation}
It follows that
\begin{multline} \label{decinter}
\p \Big (  \max_{2 \leq 2j \leq [n/q]} \Bigl | \sum_{i=1}^j
\tilde U_{2i} \Big | +\max_{1 \leq 2j-1 \leq [n/q]} \Big |
\sum_{i=1}^j \tilde U_{2i -1} \Big | \geq  2x  \Big
 ) \\
  \leq   4 \exp \left ( -
\frac{s_n^2}{8(qM)^2} h \left( \frac{ 2 x q M }{  s_n^2}\right)
\right
 )   + \frac{16 n qM}{ s_n^2} M
\alpha_{2, {\bf Y}}(q)
 \, .
\end{multline}
Now by using Markov's inequality, we get that
\[
 \p \Big ( \sum_{i=1}^{ [n/q]} |  U'_{i} -\tilde U_{i} | +
\sum_{k=1}^n  | X_k'' | \geq  x  \Big
 ) \\
    \leq \frac{1}{x} \Big ( \sum_{i=1}^{[n/q]} \| \E (  U'_{i}  | {\cal
M}_{(i-2)q} )\|_1 +  \sum_{k=1}^n \| X''_k \|_1  \Big )\, .
\]
By stationarity, we have that
\begin{eqnarray*}
 \sum_{i=1}^{[n/q]} \|
\E (  U'_{i}  | {\cal M}_{(i-2)q} )\|_1 \leq \frac{n}{q}
\sum_{i=q+1}^{2q}\| \E ( X'_i | {\cal M}_{0} )\|_1 \, .
\end{eqnarray*}
Setting $A = \sign \{\E ( X'_i | {\cal M}_{0} )\}$, we get that
\[
\| \E ( X'_i | {\cal M}_{0} )\|_1 = \E ((A - \E A)X_i')=
\sum_{\ell=1}^L a_{\ell} \E \big ( (A - \E A) (g_M \circ
f_{\ell}(Y_i) - \E g_M \circ f_{\ell}(Y_i))\big )
\]
 Now  applying again Theorem 1.1 in  Rio (2000),
 and using the fact that $Q_{|g_M \circ f_{\ell}(Y_i)|}
(u) \leq Q(u)$, we derive that
\begin{eqnarray*}
& & \vert  \E \big ( (A - \E A) (g_M \circ f_{\ell}(Y_i) - \E g_M
\circ f_{\ell}(Y_i))\big )
  \leq 2 \int_0^{ 2\bar \alpha ( A, g_M \circ
f_{\ell}(Y_i))} Q(u) du
 \, .
\end{eqnarray*}
Since for all $i \geq q$,
\[
\bar \alpha ( A, g_M \circ f_{\ell}(Y_i) ) \leq 2 \bar
\alpha(A, Y_i) \leq  \alpha_{1, {\bf Y}} (i) \leq  \alpha_{2,
{\bf Y}} (i) \, ,
\]
we derive that
  \begin{equation}
  \label{majnorm1prime}
  \| \E ( X'_i | {\cal M}_{0} )\|_1 \leq 4 \int_0^{\alpha_{2, {\bf Y}} (i)} Q(u) du \, ,
  \end{equation}
which implies that
  \begin{equation}
  \label{decinter2}
  \p\Big ( \sum_{i=1}^{ [n/q]} |  U'_{i} -\tilde U_{i} | +
  \sum_{k=1}^n  | X_k'' | \geq  x  \Big
  ) \leq \frac{ 4 n}{x}\int_0^{\alpha_{2, {\bf Y}} (q)} Q (u) du
  + \frac{1}{x} \sum_{k=1}^n \E ( |
  X_k''|) \, .
  \end{equation}
Then starting from \eqref{dec15FN}, if $q$
and $M$ are chosen in such a way that $qM \leq x$, we derive from
\eqref{decinter} and \eqref{decinter2} that
 \begin{equation}
 \label{decinter3}
 \begin{split}
  \p \Big ( \max_{1 \leq k \leq  n } |S_k | \geq 5x \Big) & \leq
    4 \exp \left ( - \frac{s_n^2}{8(qM)^2} h \left( \frac{ 2 x q
  M }{  s_n^2}\right) \right
  ) + \frac{16 n qM}{ s_n^2} M
  \alpha_{2, {\bf Y}}(q) \\
  & \ \ \     + \frac{4n}{x}\int_0^{\alpha_{2, {\bf Y}} (q)} Q (u) du
  +  \frac{1}{x} \sum_{k=1}^n \E ( |X_k''|)
  \, .
  \end{split}
  \end{equation}
Now choose $v = S(x/r)$, $q = \min\{q\in \N \tq \alpha_{2,{\bf
Y}}(q)\leq v\} \wedge n $ and $M = Q (v)$. Since $R$ is right
continuous, we have $R(S(w))\leq w$ for any $w$, hence
\[
qM = R(v) = R (S(x/r)) \leq x/r \leq x \, .
\]
Note also that, writing $\varphi_M(x)=(|x|-M)_+$,
\[
\sum_{k=1}^n \E ( | X_k''|) \leq 2 \sum_{\ell =1}^L
|a_{\ell}|\sum_{k=1}^n \E (\varphi_{M} ( f_{\ell} (Y_k)))
\]
and that $Q_{\varphi_M ( f_{\ell} (Y_k))} \leq Q_{| f_{\ell}
(Y_k) |}\I_{[0,v]} \leq Q\I_{[0,v]} $. Consequently
  \begin{equation} \label{dec13FN}
  \sum_{k=1}^n  \E ( | X_k''|)
  \leq  2 \sum_{\ell =1}^L |a_{\ell}|\sum_{k=1}^n  \int_0^v  Q_{|
  f_{\ell} (Y_k) |} (u) du
  \leq  2n \int_0^v Q(u) du \, .
  \end{equation}

Assume first $q<n$. The choice of $q$ then implies that
$\alpha_{2, {\bf Y}}(q) \leq v $ and $M \alpha_{2, {\bf Y}} (q)
\leq v Q(v)\leq \int_0^v Q(u)du$. Moreover, as $qM\leq x/r$, we
have
  \begin{equation*}
  \frac{1}{(qM)^2} h \left( \frac{ 2 x q
  M }{  s_n^2}\right)
  \geq \frac{r^2}{x^2}h \left( \frac{ 2 x^2
  }{ r s_n^2}\right),
  \end{equation*}
since the function $t\mapsto t^{-2}h(t)$ is decreasing.
Together with \eqref{decinter3} and \eqref{dec13FN}, this gives
the desired inequality \eqref{eqmainmax}.

If $q=n$, the previous argument breaks down since we may have
$\alpha_{2, {\bf Y}}(q) > v $. However, a much simpler argument
is available. Indeed, bounding simply $X'_i$ by $2M$, we obtain
$\max_{1\leq k\leq n}|S_k| \leq 2qM + \sum_{k=1}^n |X''_k|$.
Since $2qM\leq 2x$, this gives
  \begin{equation*}
  \p \Big ( \max_{1 \leq k \leq  n } |S_k | \geq 5x \Big)
  \leq \frac{1}{x} \sum_{k=1}^n  \E ( | X_k''|).
  \end{equation*}
With \eqref{dec13FN}, this again implies \eqref{eqmainmax}.

The proposition is proved for any variable $X_i=f(Y_i) - \E(f(Y_i)) $ with
$f=\sum_{\ell=1}^L a_{\ell} f_{\ell}$ and $f_{\ell} \in \tMon (Q, P_{Y_0})$,
$\sum|a_\ell|\leq 1$. Since these functions are dense in $\tMonm (Q, P_{Y_0})$ by
definition, the result follows by applying Fatou's lemma. \qed

\subsection{Proof of Theorem \ref{ASthm}}

Let us first prove the  inequality (\ref{r1app2LIL}). We follow
the proof of Theorem 6.4 page 89 in Rio (2000), and we use the
same notations: $Lx=\ln(x \vee e)$ and $LLx=\ln( \ln (x \vee e)
\vee e)$. Let $A$ be as in (\ref{r1app2LIL}). We apply
Proposition \ref{FN} with
\[
r=r_n=8LLn, \quad
 x=x_n=(A\sqrt{2n LLn})/5 \quad \text{and} \quad  s_n=x_n/\sqrt{r_n} \, .
\]
We obtain
\[
\sum_{  n >0}  \frac 1n {\mathbb P} \Big ( \sup_{1 \leq k \leq n} |S_k |
\geq A \sqrt{2 n LLn} \Big ) \leq 4 \sum_{n>0} \frac{1}{n 3^{LLn}}+
22\sum_{n>0}\frac{1}{x_n}\int_0^{S(x_n/r_n)} Q(u) du \, .
\]
Clearly the first series on right hand converges.  From the end
of the proof of Theorem 6.4 in Rio (2000), we see that the
second  series on the right hand side converges. This completes
the proof of (\ref{r1app2LIL}).

Note that the inequality \eqref{r1app2LIL} implies that
\begin{eqnarray} \label{r2app2LIL}
  \limsup_{  n \ra \infty}  \frac{|
S_n |}{\sqrt{2 n LLn} }  \leq 20 \Big (\sum_{k \geq 0}
\int_0^{\alpha_{1, {\bf Y}}(k)} Q^2(u) du \Big )^{1/2} \mbox{
almost surely}\, .
\end{eqnarray}

We turn now to the proof of \eqref{asr1}. Assume that $\theta$ is ${\mathbb P}$-ergodic. In 1973, Gordin (see
also Esseen and Janson (1985)) proved that if
\begin{equation}
\label{condgord1} \sum_{k \geq 1} \| {\mathbb E} (X_k
|{\mathcal M}_0) \|_1 < \infty
\end{equation} and \begin{equation} \label{condgord2} \liminf_{n \rightarrow \infty}
\frac{1}{\sqrt n} {\mathbb E} \Big( \Big |\sum_{k=1}^n X_k \Big
|\Big) < \infty \, ,
\end{equation} then
$ X_0 = D_0 + Z_0-Z_0 \circ \theta \, , $ where
$\|Z_0\|_1<\infty$, ${\mathbb E}(D_0^2)<\infty$, $D_0 $ is
${\mathcal M}_0$-measurable, and ${\mathbb E} (D_0 |{\mathcal
M}_{-1})=0$.

Notice now that by a similar computation than to get
\eqref{majnorm1prime}, we have that
\begin{equation} \label{majgamma}
\| {\mathbb E} (X_k |{\mathcal F}_0) \|_1 \leq 4 \int_0^{\alpha_{1, {\bf Y}}
(k)} Q(u) du \, .
\end{equation}
Hence \eqref{condQ} implies \eqref{condgord1}. Now clearly
\eqref{condgord2} holds as soon as $\sum_{k=0}^{\infty} |\Cov
(X_0,X_k)| < \infty$ which holds under \eqref{condQ} by
applying the upper bound \eqref{covinerio} with $M=\infty$
(note that this also justifies the convergence of the series
$\sigma^2$).

Consequently, if we set $D_i=D_0 \circ \theta^i$, and $Z_i=Z_0
\circ \theta^i$, we then obtain under \eqref{condQ} that
  \begin{equation}
  \label{coboundSm} S_{n}=M_{n} + Z_1-Z_{n+1} \, ,
  \end{equation}
where $M_{n}=\sum_{j=1}^nD_j$ is a martingale in ${\mathbb L}^2$ and $Z_0$ is integrable.
Now \eqref{asr1} follows by the almost sure invariance principle for martingales (see
Theorem 3.1 in Berger (1990)) if we can prove that
  \begin{equation} \label{negRnas} Z_n =o(\sqrt {n
  LLn})\, ,
  \quad  \text{almost surely.}
  \end{equation}
According to the lemma page 428 in Voln\'{y} and Samek (2000), we
have either \eqref{negRnas} or
\begin{equation}
\label{notneg} {\mathbb P} \Big( \limsup_{n \rightarrow \infty}
\frac{|Z_n|}{\sqrt {n LLn}} = \infty \Big )=1\, .
\end{equation}
Using the decomposition \eqref{coboundSm}, the fact that $M_n$
satisfies the law of the iterated logarithm and that $S_n$
satisfies \eqref{r2app2LIL}, it is clear that \eqref{notneg}
cannot hold, which then proves \eqref{negRnas} and ends the
proof of \eqref{asr1}.
\qed

\section{Proofs of the dynamical estimates}
\label{secdyn}

If $f$ is supported in $[0,1]$, let $V(f)$ be the variation of
the function $f$, given by
  \begin{equation*}
  \Var(f)=\sup_{x_0<\dots<x_N} \sum_{i=1}^N | f(x_{i+1})-f(x_i)|\, ,
  \end{equation*}
where the $x_i$s are real numbers (not necessarily in $[0,1]$).
Note that $\Var(.)$ is a norm and that $\Var(f\cdot g)\leq
\Var(f)\Var(g)$.

Let us fix once and for all a GPM map $T:[0,1]\to [0,1]$ of
parameter $\gamma\in(0,1)$. Let $v_k: T_{(k)} I_k \to I_k$ be
the inverse branches of $T$. Consider $M=\{m\in \{1,\dots,d-1\}
\tq 0 \in T_{(m)}I_m\}$, and let $z_0\in (0,y_1)$ be so small
that $v_m$ is well defined on $[0,z_0]$ for any $m\in M$,
$v_0'$ is decreasing on $(0,z_0]$ (this is possible since
$v_0''(x)<0$ for small $x$), and $T_{(k)}I_k\cap
[0,z_0]=\emptyset$ for $k\not\in M$. Note that $M \neq \emptyset$, since $T$
is topologically transitive.

Define a sequence $z_n$ inductively by $z_n=v_0(z_{n-1})$. Let
$J_n=(z_{n+1},z_n]$, so that $T^n$ is bijective from $J_n$ to
$(z_1,z_0]$. Following the procedure in Zweim\"{u}ller (1998), the
invariant measure of $T$ may be constructed as follows: we
first consider the first return map on $(z_1,1]$. It is Rychlik
and topologically transitive, hence it admits an invariant
measure $\nu_0$ on $(z_1,1]$ whose density $h_0$ is bounded from above and
below in $(z_1, 1]$ and has bounded variation. Extending $\nu_0$ to the whole
interval by the formula
$$
\nu(A)=\nu_0(A \cap (z_1, 1]) + \sum_{n \geq 1} \nu_0(T^{-n}(A) \cap \{ \phi > n \})\, ,
$$
where $\phi$ is the first return time to
$(z_1,1]$, and then renormalizing, we obtain the invariant
probability measure of $T$. Denoting by $h$ the density of
$\nu$, the previous formula becomes, for $x\in [0,z_1]$,
  \begin{equation}
  \label{formule_pour_h}
  h(x)=\sum_{n=0}^\infty \sum_{m\in M} |(v_m v_0^n)'(x)| h(v_m v_0^n x).
  \end{equation}

Our goal in this paragraph and the next is to study the
Perron-Frobenius operator $ K^n$ acting on the space $\BV$ of
bounded variation functions. Let $ K(x,y)$ be the kernel
corresponding to the operator $ K$. It is given by $ K(x, v_k
x)= h(v_k x) |v'_k(x)|/h(x)$ for $k\in\{0,\dots,d-1\}$,
and $ K(x,y)=0$ if $y$ is not of the form $v_k x$. By
definition,
  \begin{equation*}
  K^n f(x_0)=\sum_{x_1,\dots,x_n}  K(x_0,x_1) K(x_1,x_2)
  \dots  K(x_{n-1},x_n) f(x_n)\, .
  \end{equation*}
To understand the behavior of $ K^n$, we will break the
trajectories $x_0,\dots, x_n$ of the random walk according to
their first and last entrance in the reference set $(z_1,1]$ --
the interest of this set is that $T$ is uniformly expanding
there. More precisely, let us define operators $A_n,B_n,C_n$
and $T_n$ as follows: they are defined like $ K^n$ but we only
sum over trajectories $x_0,\dots,x_n$ such that
\begin{itemize}
\item For $A_n$, $x_0,\dots,x_{n-1} \in [0,z_1]$ and $x_n\in
(z_1,1]$.
\item For $B_n$, $x_0\in (z_1,1)$ and $x_1,\dots,x_n\in [0,z_1]$.
\item For $C_n$, $x_0,\dots,x_n\in [0,z_1]$.
\item For $T_n$, $x_0\in (z_1,1]$ and $x_n\in (z_1,1]$.
\end{itemize}
By construction, one has the decomposition
  \begin{equation}
  \label{eqdec}
   K^n f= \sum_{a+k+b=n} A_a T_k B_b f + C_n f \, .
  \end{equation}
One can give formulas for $A_n,B_n$ and $C_n$, as follows:
\begin{eqnarray}\label{opAn}
A_n f(x)&=&\I_{[0, z_1]}(x) \sum_{m\in M} \frac{|(v_m v_0^{n-1})'(x)|  h(v_m v_0^{n-1} x)}{ h(x)} f(v_mv_0^{n-1} x)\, ,\\
\label{opBn}
B_n f(x)&=& \I_{(z_1,z_0]}(x)\frac{(v_0^n)'(x)  h(v_0^n x)}{ h(x)}f(v_0^n x)\, ,\\
\label{opCn}
C_n f(x)&=&\I_{[0,z_1]}(x)\frac{(v_0^n)'(x) h(v_0^n x)}{ h(x)} f(v_0^n x)\, .
\end{eqnarray}

On the other hand, the operator $T_n$ is less explicit, but it
can be studied using operator renewal theory.
\begin{prop}
The operator $T_n$ can be decomposed as
  \begin{equation}\label{Gouezel07}
  T_n f= \left(\int_{(z_1,1]} f d\nu\right)\I_{(z_1,1]} + E_n f\ ,
  \end{equation}
where the operator $E_n$ satisfies $\displaystyle \Var(E_n f)\leq
\frac{C}{n^{(1-\gamma)/\gamma}}\Var(f)$.
\end{prop}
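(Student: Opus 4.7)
The plan is to use operator renewal theory, in the spirit of Sarig (2002) and as in the approach of Gou\"ezel (2007). The key observation is that $T_n$ records trajectories starting and ending in $Y=(z_1,1]$, where $T$ is uniformly expanding, with the excursions through $[0,z_1]$ controlled by the neutral fixed point analysis.

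Since no constraint is imposed on intermediate points in the definition of $T_n$, one has the identity $T_n f = \I_Y \cdot K^n(\I_Y f)$. Decomposing a trajectory $x_0,\ldots,x_n$ (with $x_0,x_n \in Y$) according to the time $k$ of its first visit to $Y$ after time $0$, I obtain the renewal equation
\[
T_n = \sum_{k=1}^n R_k T_{n-k}, \qquad n\geq 1,
\]
where $T_0$ is the identity on functions supported in $Y$ and $R_k$ is the first-return operator at time exactly $k$. Then $\hat K := \sum_{k\geq 1} R_k$ is the Perron-Frobenius operator of the first-return map $\hat T$ on $Y$. This induced map is Rychlik (uniformly expanding with bounded distortion, by Lemma 5 of Young (1999)) and topologically mixing, so $\hat K$ is quasi-compact on $\BV(Y)$ with $1$ as its only eigenvalue of modulus one, simple and isolated, with right eigenvector $\I_Y$.

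The heart of the argument is then the tail estimate on $R_k$. For $k\geq 2$, any trajectory contributing to $R_k$ must spend exactly $k-1$ steps in $[0,z_1]$, which forces the path to cross $J_{k-1}=(z_k,z_{k-1}]$ and iterate near $0$ via $v_0$; writing the kernel of $R_k$ explicitly, in a form analogous to \eqref{opAn}, and combining Zweim\"uller's asymptotic $z_n\asymp n^{-1/\gamma}$, the distortion control of $v_0^{k-1}$ near $0$, and the density estimate $h(x)\asymp x^{-\gamma}$, yields $\Var(R_k f)\leq C\Var(f)/k^{1+1/\gamma}$, and hence $\sum_{k>n}\Var(R_k f)\leq C\Var(f)/n^{1/\gamma}$. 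Equipped with the renewal equation, the spectral gap and aperiodicity of $\hat K$, and this polynomial tail, the operator renewal theorem of Sarig-Gou\"ezel delivers the expansion $T_n = \Pi + E_n$ with $\Var(E_n f)\leq C\Var(f)/n^{(1-\gamma)/\gamma}$; the form $\Pi f = \I_Y \int_Y f\, d\nu$ emerges from the standard identification of the spectral projector with right eigenvector $\I_Y$ and left eigenvector $\nu|_Y$, the normalization being fixed by Kac's formula $\int_Y \phi\, d\nu = 1$. The main obstacle is securing the sharp exponent $1+1/\gamma$ in the tail bound on $R_k$, which requires the fine distortion control near $0$ provided by Young (1999), rather than merely the qualitative assumptions in the definition of the class of GPM maps.
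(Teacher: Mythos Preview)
Your proposal is correct and follows essentially the same route as the paper: define the first-return operators $R_k$, establish the tail bound $\Var(R_k f)\leq C\Var(f)/k^{1+1/\gamma}$ (the paper cites Lemma~3.1 of Gou\"ezel (2007) for this), derive the spectral properties of $R(1)=\hat K$ from the transitivity of $T$, and apply the operator renewal theorem (specifically Theorem~1.1 in Gou\"ezel (2004b)). Your added observations --- the identity $T_n f=\I_Y K^n(\I_Y f)$ and the identification of the projector via Kac's formula $\int_Y \phi\,d\nu=1$ --- are correct and make explicit what the paper leaves implicit; the only point to tighten is that you assert the induced map is topologically \emph{mixing}, whereas the paper argues directly that $I-R(z)$ is invertible for $|z|\leq 1$, $z\neq 1$, from the topological \emph{transitivity} of $T$.
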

\begin{proof}
Since this follows closely from the arguments in Sarig (2002),
Gou\"{e}zel (2004b) and Gou\"{e}zel (2007), we will only sketch the
proof.

Define an operator $R_n$ by $R_n f(x_0)=\I_{(z_1,1]}(x)\sum
K(x_0,x_1)\dots K(x_1,x_n) f(x_n)$, where the summation is over
all $x_1,\dots,x_{n-1} \in [0,z_1]$ and $x_n \in (z_1,1]$: this
operator is similar to $T_n$, but it only takes the first
returns to $(z_1,1]$ into account. Breaking a trajectory into
its successive excursions outside of $(z_1,1]$, it follows that
the following renewal equation holds: $T_n=\sum_{\ell=1}^\infty
\sum_{k_1+\dots+k_\ell=n}R_{k_1}\dots R_{k_\ell}$. In other
words, $I+\sum T_n z^n=(I-\sum R_k z^k)^{-1}$, at least as
formal series.

In the proof of Lemma 3.1 in Gou\"{e}zel (2007), it is shown that
the operators $R_k$ act continuously on $\BV$, with a norm
bounded by $C/k^{1+1/\gamma}$ -- the estimates in Gou\"{e}zel do
not deal with the factor $h$, but since this function as well
as its inverse have bounded variation on $(z_1,1]$ they do not
change anything. Since this is summable, we can define, for
$|z|\leq 1$, an operator $R(z)=\sum R_n z^n$ acting on $\BV$.
Moreover, Gou\"{e}zel (2007) also proves that the essential
spectral radius of this operator is $<1$ for any $|z|\leq 1$.
Thanks to the topological transitivity of $T$, it follows that
$R(1)$ has a simple eigenvalue at $1$ (the corresponding
eigenfunction is the constant function $1$), while $I-R(z)$ is
invertible for $z\not=1$.

This spectral control makes it possible to apply Theorem 1.1 in
Gou\"{e}zel (2004b), dealing with renewal sequences of operators as
above. Its conclusion implies \eqref{Gouezel07}.
\end{proof}

With \eqref{eqdec}, we finally  obtain that
  \begin{equation}
  \label{eq_somme}
  K^n f = \sum_{a+k+b=n} A_a (\I_{(z_1, 1]}) \cdot \nu( B_b f) +
  \sum_{a+k+b=n} A_a E_k B_b f + C_n f \, ,
  \end{equation}
where
\begin{equation}
  \label{eqfinalEk}
  \Var(E_k f)\leq \frac{C} {k^{(1-\gamma)/\gamma}} \Var(f).
  \end{equation}

\subsection{Proof of Proposition \ref{propvar}}
\label{subsecproof}

We shall prove successively that, for $n>0$,
\begin{eqnarray}
  \label{eqfinalCn}
  \Var(C_n f)&\leq& C \Var(f)\, , \\
  \label{eqfinalAa}
  \Var(A_n f) &\leq& C \Var(f)/(n+1)\, , \\
  \label{eqfinalBb}
  \Var(B_n f) &\leq& C \Var(f)/(n+1)^{1/\gamma}\, .
\end{eqnarray}
The proof of Proposition \ref{propvar} follows from the above
upper bounds and from the following elementary lemma.
\begin{lma}
\label{lem_convole} Let  $u_n$ and  $v_n$ be two non increasing sequences such that
$u_{[n/2]} \leq Cu_n$ and $v_{[n/2]}\leq C v_n$. Then
  \begin{equation*}
  \sum_{i+j=n} u_i v_j \leq C u_n \left(\sum_{j=0}^n v_i\right)
  + C v_n \left(\sum_{i=0}^n u_i \right)\, .
  \end{equation*}
\end{lma}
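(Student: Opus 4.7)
The plan is to decompose the convolution sum at the midpoint and exploit the doubling-type hypotheses $u_{[n/2]}\leq C u_n$ and $v_{[n/2]}\leq C v_n$, which prevent either sequence from decaying too quickly and thus allow us to replace $v_{n-i}$ by $v_n$ when $i$ is small, and symmetrically $u_i$ by $u_n$ when $i$ is large.

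Concretely, I would split
\[
\sum_{i+j=n} u_i v_j \;=\; \sum_{0\leq i\leq [n/2]} u_i\, v_{n-i} \;+\; \sum_{[n/2]<i\leq n} u_i\, v_{n-i}.
\]
On the first block the index $j=n-i$ satisfies $j\geq n-[n/2]\geq [n/2]$, so the monotonicity of $v$ combined with the hypothesis gives $v_{n-i}\leq v_{[n/2]}\leq C v_n$; factoring this out bounds the first sum by $C v_n \sum_{i=0}^{[n/2]} u_i \leq C v_n \sum_{i=0}^n u_i$. Symmetrically, on the second block $i\geq [n/2]+1$, so by monotonicity and the doubling hypothesis for $u$ we have $u_i\leq u_{[n/2]}\leq C u_n$; factoring this out bounds the second sum by $C u_n \sum_{j=0}^{n-[n/2]-1} v_j \leq C u_n \sum_{j=0}^n v_j$. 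Adding these two estimates yields the claimed inequality.

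There is no substantive obstacle here — the result is a standard "dyadic splitting" observation, and the only bookkeeping concerns the endpoint $i=[n/2]$ (which may be placed in either block without loss) and the reindexing $j=n-i$ to bring the sums into the form stated in the lemma. Both are absorbed into the constant $C$, which, per the convention announced in the paper, is allowed to change from line to line.
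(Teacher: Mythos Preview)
Your proof is correct and is essentially identical to the paper's own argument, which is stated in two lines: for $i\leq n/2$ one bounds $v_j$ by $Cv_n$, and for $j\leq n/2$ one bounds $u_i$ by $Cu_n$. Your version simply spells out the reindexing and the endpoint bookkeeping that the paper leaves implicit.
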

\begin{proof}
If  $i\leq n/2$, we use that  $v_j$ is bounded by $Cv_n$. If
$j\leq n/2$, we use that  $u_i$ is bounded by  $Cu_n$.
\end{proof}

\medskip

We can now complete the proof, assuming the bounds
\eqref{eqfinalCn}, \eqref{eqfinalAa}, and \eqref{eqfinalBb}:
\begin{proof}[Proof of Proposition \ref{propvar}]
Let $f$ be such that $\nu(f)=0$. We will bound $V(K^n f)$ using
the decomposition of $K^n f$ given in \eqref{eq_somme}. Using
\eqref{eqfinalAa}, \eqref{eqfinalEk} and \eqref{eqfinalBb}, we
get
  \begin{equation*}
  \Var\left(\sum_{a+k+b=n} A_a E_k B_b f \right)
  \leq C \Var(f) \sum_{a+k+b=n} \frac{1}{(a+1)(k+1)^{(1-\gamma)/\gamma}(b+1)^{1/\gamma}}\, .
  \end{equation*}
By lemma \ref{lem_convole},
\[
 \sum_{k+b=j}\frac{1}{(k+1)^{(1-\gamma)/\gamma}(b+1)^{1/\gamma}}
 \leq
 \frac{C}{(j+1)^{(1-\gamma)/\gamma}}
 \]
and \[ \sum_{a+j=n}\frac{1}{(a+1) (j+1)^{(1-\gamma)/\gamma}}
\leq C\Big ( \frac{\ln (n)}{
 (n+1)^{(1-\gamma)/\gamma}} \vee \frac 1n \Big)\, .
\]
Consequently,
  \begin{equation}\label{premierterm}
  \Var\left(\sum_{a+k+b=n} A_a E_k B_b f \right)
  \leq C \Var(f)\Big ( \frac{\ln (n)}{
 (n+1)^{(1-\gamma)/\gamma}} \vee \frac 1n \Big)\, .
  \end{equation}
It remains to bound up the first term in  \eqref{eq_somme},
which can be written
  \begin{equation*}
  \sum_{a=0}^n A_a (\I_{(z_1,1]}) \cdot \left(\sum_{b=0}^{n-a} \nu( B_b f)\right)\, .
  \end{equation*}
Now, $\sum_{b=0}^\infty  \nu(B_b f) = \nu(f)=0$, so that
  \begin{equation*}\label{controlcentre}
  \left|\sum_{b=0}^{n-a} \nu( B_b f ) \right| =
   \left| \sum_{b>n-a} \nu( B_b f ) \right|
  \leq \sum_{b>n-a} \Var(B_b f)
  \leq \sum_{b>n-a}  \frac{C\Var(f)}{(b+1)^{1/\gamma}}
  \leq  \frac{D\Var(f)}{(n+1-a)^{(1-\gamma)/\gamma}}\, .
  \end{equation*}
By \eqref{eqfinalAa}, $\Var(A_a \I_{(z_1,1]}) \leq C/(a+1)$.
Consequently,
  \begin{equation}
  \label{secondterm}
  \begin{split}
  \Var\left(\sum_{a=0}^n A_a (\I_{(z_1,1]}) \cdot \left(\sum_{b=0}^{n-a} \nu( B_b f)\right)\right)
  &\leq
  C \Var(f) \sum_{a=0}^n \frac{1}{(a+1)(n+1-a)^{(1-\gamma)/\gamma}}
  \\&
  \leq D \Var(f)\Big ( \frac{\ln (n)}{
 (n+1)^{(1-\gamma)/\gamma}} \vee \frac 1n \Big)\, ,
  \end{split}
  \end{equation}
the last inequality following from Lemma \ref{lem_convole}.

Starting from \eqref{eq_somme} and using \eqref{eqfinalCn},
\eqref{premierterm} and \eqref{secondterm} we obtain that
$\Var(K^n f)\leq C\Var(f)$ for any $f$ such that $\nu(f)=0$.
Now let $f$ be any $\BV$ function on $[0, 1]$, and let $\|df \|$ be the variation norm of the measure $df$ on $[0, 1]$. To conclude the proof,
it suffices to note that $\|dK^n(f)\|= \|dK^n(f^{(0)})\| \leq \Var(K^n(f^{(0)})) \leq C\Var(f^{(0)}) \leq 3C\|df\|$.
\end{proof}

It remains to prove the upper bounds \eqref{eqfinalCn},
\eqref{eqfinalAa}, and \eqref{eqfinalBb}. We shall use the
following facts, proved e.g.~in Liverani, Saussol and Vaienti
(1999) or Young (1999). We will denote Lebesgue measure by
$\lambda$.
\begin{enumerate}
\item One has $z_n\sim C /n^{1/\gamma}$ for some $C>0$. Moreover,
$\lambda(J_n)=z_n-z_{n+1} \sim C/n^{(1+\gamma)/\gamma}$ for
some $C>0$. One has
  \begin{equation}
  \label{hxn}
  h(z_n)\sim C z_n^{-\gamma} \sim D n\, .
  \end{equation}
\item There exists a constant $C>0$ such that, for all $n\geq
0$ and $k\geq 0$, and for all $x,y\in J_{k}$,
  \begin{equation*}
  \left| 1 -\frac{ (v_0^n)'(x)}{(v_0^n)'(y)}\right|
  \leq C |x-y|\, .
  \end{equation*}
Integrating the above inequality, we obtain that
  \begin{equation}
  \label{bornev0n}
  C^{-1} \frac{ \lambda(J_{n+k})}{\lambda(J_{k})} \leq (v_0^n)'(x) \leq C
  \frac{ \lambda(J_{n+k})}{\lambda(J_{k})}\, .
  \end{equation}
\item The function $(v_0^n)'$ is decreasing on $[0,z_1)$.
\end{enumerate}

The following easy lemma follows from the definition of $\Var$.
\begin{lma}
\label{lemmonotone} If $f$ is nonnegative and monotonic on some interval $I$, then
  \begin{equation}
  \label{mon}
  \Var(\I_I f) \leq C \sup_I |f|.
  \end{equation}
If $f$ is positive on some interval $I$, then
  \begin{equation}
  \label{var_inverse}
  \Var(\I_I/f) \leq C \Var(\I_I f)/\min_I |f|^2.
  \end{equation}
\end{lma}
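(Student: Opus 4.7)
The plan is to prove both inequalities directly from the definition of $\Var$, analyzing separately the contribution of boundary jumps (at the endpoints of $I$) and the contribution of subintervals of a partition that lie inside $I$.

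For the first inequality, write $I=(a,b)$ and note that $\I_I f$ is monotonic on $I$, nonnegative, and identically zero outside $I$. For any partition $x_0<\dots<x_N$, split the sum $\sum_i |\I_I f(x_{i+1})-\I_I f(x_i)|$ into three types of terms: (i) both $x_i,x_{i+1}\in I$, (ii) exactly one of $x_i,x_{i+1}$ lies in $I$, and (iii) both lie outside $I$. Type (iii) contributes $0$. The contribution of type (i) terms telescopes, by monotonicity of $f|_I$, to a quantity bounded by $\sup_I f-\inf_I f\leq\sup_I|f|$. Each type (ii) term contributes a value of $f$ at a point of $I$, hence is bounded by $\sup_I|f|$, and there are at most two such terms in any partition (one near $a$, one near $b$). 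This gives $\Var(\I_I f)\leq 3\sup_I|f|$.

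For the second inequality, fix a partition $x_0<\dots<x_N$ and bound each term of $\sum_i |\I_I(x_{i+1})/f(x_{i+1})-\I_I(x_i)/f(x_i)|$ in terms of the corresponding term of $\sum_i|\I_I f(x_{i+1})-\I_I f(x_i)|$. When $x_i,x_{i+1}\in I$, use
\[
\left|\frac{1}{f(x_{i+1})}-\frac{1}{f(x_i)}\right|
=\frac{|f(x_{i+1})-f(x_i)|}{f(x_i)f(x_{i+1})}
\leq\frac{|f(x_{i+1})-f(x_i)|}{(\min_I f)^2}
=\frac{|\I_I f(x_{i+1})-\I_I f(x_i)|}{(\min_I f)^2}.
\]
When exactly one of $x_i,x_{i+1}$ lies in $I$, say $x_i\in I$, the left-hand term equals $1/f(x_i)$, which we dominate using $f(x_i)\geq\min_I f$ by
\[
\frac{1}{f(x_i)}\leq\frac{f(x_i)}{(\min_I f)^2}=\frac{|\I_I f(x_{i+1})-\I_I f(x_i)|}{(\min_I f)^2}.
\]
Summing these termwise bounds and taking the supremum over partitions yields $\Var(\I_I/f)\leq\Var(\I_I f)/(\min_I f)^2$.

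There is essentially no obstacle here; the statement reduces to routine bookkeeping over the definition of $\Var$. The only point to watch is that for \eqref{var_inverse} the comparison has to be done partition by partition before passing to the supremum, because the boundary jump of $\I_I/f$ and that of $\I_I f$ occur at the same place but have reciprocal sizes, and this is where the factor $(\min_I f)^{-2}$ enters.
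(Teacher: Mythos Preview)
Your argument is correct and is exactly what the paper has in mind: the paper gives no explicit proof, only the sentence ``The following easy lemma follows from the definition of $\Var$.'' Your partition-by-partition analysis (with the boundary-jump bookkeeping and the identity $|1/f(x)-1/f(y)|=|f(x)-f(y)|/(f(x)f(y))$) is precisely the intended routine verification, yielding the constants $C=3$ in \eqref{mon} and $C=1$ in \eqref{var_inverse}.
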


We shall also use the following lemma on the density $h$.
\begin{lma}
\label{lem_densite_h}
There exists a constant $C$ such that, for any $1\leq i < j$,
  \begin{equation}
  \Var( \I_{[z_j,z_i]} h) \leq C j \quad \text{ and } \Var(\I_{[z_j,z_i]} /h)\leq Cj/i^2.
  \end{equation}
\end{lma}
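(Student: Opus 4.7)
The plan is to derive the first bound directly from the explicit series representation \eqref{formule_pour_h} of $h$ on $[0,z_1]$, and then to deduce the second bound by combining the first with the inverse estimate \eqref{var_inverse}. For the reduction of the second bound to the first, recall that the introduction gives the lower bound $h(x)\geq c\,x^{-\gamma}$, so that on $[z_j,z_i]$ we have $\min h \geq c\,z_i^{-\gamma}\sim c'\,i$ by \eqref{hxn}. Applying \eqref{var_inverse} then yields $\Var(\I_{[z_j,z_i]}/h)\leq C\Var(\I_{[z_j,z_i]}h)/i^2$, plus boundary contributions of order $1/h(z_i)\sim 1/i\leq j/i^2$, so the second bound follows once the first is proved.

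For the main bound $\Var(\I_{[z_j,z_i]}h)\leq Cj$, I would write
\[
  h(x)=\sum_{n\geq 0}\sum_{m\in M} g_{m,n}(x),\qquad g_{m,n}(x)=|(v_m v_0^n)'(x)|\,h(v_m v_0^n x),
\]
for $x\in[z_j,z_i]\subset[0,z_1]$, and bound the variation of each summand. The key geometric fact is that $v_m v_0^n$ maps $[z_j,z_i]$ monotonically into $v_m([z_{j+n},z_{i+n}])\subset I_m\subset(z_1,1]$, a region bounded away from the fixed point; on $I_m$, $h$ agrees (up to the normalization constant) with the Rychlik density $h_0$ of the induced map on $(z_1,1]$, which is bounded and of bounded variation. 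Combining this with the $C^2$ regularity of $v_m$, the monotonicity of $(v_0^n)'$ on $[0,z_1)$ (property 3) and the distortion estimate \eqref{bornev0n}, the product rule for variation gives
\[
  \Var_{[z_j,z_i]}g_{m,n}+2\sup_{[z_j,z_i]}|g_{m,n}|\;\leq\; C\,(v_0^n)'(z_j)\;\leq\; C\,\frac{\lambda(J_{j+n})}{\lambda(J_j)}\;\leq\; C\Bigl(\frac{j}{j+n}\Bigr)^{(1+\gamma)/\gamma}.
\]

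It then remains to sum over $n\geq 0$ and $m\in M$ (a finite set): since $(1+\gamma)/\gamma-1=1/\gamma>0$,
\[
  \sum_{n\geq 0}\Bigl(\tfrac{j}{j+n}\Bigr)^{(1+\gamma)/\gamma}\;=\;j^{(1+\gamma)/\gamma}\!\sum_{n\geq 0}(j+n)^{-(1+\gamma)/\gamma}\;\leq\; C\,j^{(1+\gamma)/\gamma}\cdot j^{-1/\gamma}\;=\;C\,j,
\]
giving $\Var(\I_{[z_j,z_i]}h)\leq Cj$. The main obstacle is not conceptual but careful bookkeeping: one must verify that the three-factor product rule really costs only $(v_0^n)'(z_j)$, which in turn requires the uniform BV bound for $h|_{I_m}$ (guaranteed by the Rychlik-type analysis of the induced map) together with uniform $C^2$ control of the inverse branches $v_m$ on $[0,z_0]$. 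Once these routine checks are made, the matching of the telescoping sum with the pointwise growth $h(z_j)\sim Dj$ at the left endpoint of $[z_j,z_i]$ makes the bound $Cj$ essentially sharp.
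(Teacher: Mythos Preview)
Your proposal is correct and follows essentially the same approach as the paper: both start from the series representation \eqref{formule_pour_h}, bound each summand using the monotonicity of $(v_0^n)'$, the $C^2$ control of $v_m$, and the bounded variation of $h$ on $(z_1,1]$ to arrive at $C\,(v_0^n)'(z_j)\leq C\,\lambda(J_{n+j})/\lambda(J_j)$, and then deduce the second bound from the first via \eqref{var_inverse} and $\min_{[z_j,z_i]}h\geq c\,i$. The only cosmetic difference is in the final summation: the paper telescopes $\sum_{n\geq 0}\lambda(J_{n+j})/\lambda(J_j)=z_j/\lambda(J_j)\leq Cj$, whereas you use the power-law estimate $\sum_{n\geq 0}(j/(j+n))^{(1+\gamma)/\gamma}\leq Cj$; both are valid.
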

\begin{proof}
We start from the formula \eqref{formule_pour_h} for $h$, and
the inequality $\Var(fg)\leq \Var(f)\Var(g)$, to obtain
  \begin{equation}
  \Var( \I_{[z_j,z_i]} h) \leq \sum_{n=0}^\infty \sum_{m\in M}
  \Var( \I_{[z_j,z_i]}(v_0^n)') \cdot \Var(\I_{[z_j,z_i]}|v_m'\circ v_0^n|)\cdot \Var(\I_{[z_j,z_i]} h\circ  v_m \circ v_0^n)
  \end{equation}
Since the functions $v_m'$ have bounded variation, and the
function $h$ has bounded variation on $(z_1,1]$ (which contains
the image of $v_m v_0^n( 0,z_1)$), we get $\Var( \I_{[z_j,z_i]}
h) \leq C \sum_{n=0}^\infty \Var( \I_{[z_j,z_i]}(v_0^n)')$.
Since the function $(v_0^n)'$ is decreasing on $[z_j,z_i]$, we
get by using \eqref{mon}
  \begin{equation*}
  \Var( \I_{[z_j,z_i]} h) \leq C \sum_{n=0}^\infty (v_0^n)'(z_j)
  \leq C \sum_{n=0}^\infty \frac{\lambda(J_{n+j})}{\lambda(J_j)}
  = C\frac{z_j}{z_j-z_{j+1}} \leq C \frac{j^{-1/\gamma}}{j^{-1/\gamma-1}}=Cj.
  \end{equation*}
This proves the first inequality of the proposition.

To prove the second one, we use \eqref{var_inverse}. Since
$\min_{[z_j,z_i]}|h|\geq C z_i^{-\gamma} \geq C i$, the result
follows.
\end{proof}

\medskip

We can now prove the upper bounds \eqref{eqfinalCn},
\eqref{eqfinalAa}, and \eqref{eqfinalBb}

Since $C_n$ is given by \eqref{opCn}, the upper bound
\eqref{eqfinalCn} follows from Lemma \ref{lem_controleCn}
below.
\begin{lma}
\label{lem_controleCn} There exists $C>0$ such that, for any
$n\geq 1$,
  \begin{equation}
  \label{eqcoupevar}
  \Var \left(\I_{[0,z_1]}\frac{(v_0^n)'(x)h(v_0^n x)}{h(x)}\right) \leq C\, .
  \end{equation}
\end{lma}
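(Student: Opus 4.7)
My plan is to bound $\Var(\I_{[0,z_1]}\psi_n)$, where $\psi_n(x):=(v_0^n)'(x)h(v_0^n x)/h(x)$, by decomposing $[0,z_1]=\bigcup_{k\geq 1}J_k$ with $J_k=(z_{k+1},z_k]$, estimating the variation of $\psi_n$ on each piece, and handling the boundary contributions at $0$ and $z_1$ separately. The boundary terms are easy to control: $\psi_n(0^+)=1$ since $(v_0^n)'(0)=1$ and $h(v_0^n x)/h(x)\to 1$ as $x\to 0$ (because $v_0^n x/x\to 1$ and $h(x)\asymp x^{-\gamma}$), while the identity $Kh=h$ combined with positivity of the branch operators implies $K_0^n h\leq h$ pointwise, hence $0\leq\psi_n\leq 1$ everywhere. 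Thus the boundary contributions give $O(1)$, and the task reduces to bounding $\sum_{k\geq 1} V_k$ uniformly in $n$, where $V_k$ denotes the variation of $\psi_n$ on the interval $J_k$.

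On each $J_k$, I would apply the $\BV$ product rule to the factorization $\psi_n=(v_0^n)'\cdot (h\circ v_0^n)\cdot (1/h)$ and use three tools: (a) monotonicity of $(v_0^n)'$ on $[0,z_1]$ combined with the distortion estimate \eqref{bornev0n} controls both $\sup_{J_k}(v_0^n)'$ and the variation of $(v_0^n)'$ on $J_k$ by $C\lambda(J_{n+k})/\lambda(J_k)$; (b) the substitution $y=v_0^n x$ identifies the variation of $h\circ v_0^n$ on $J_k$ with the variation of $h$ on $J_{n+k}$, which is bounded by $C(n+k+1)$ through Lemma \ref{lem_densite_h}, and the asymptotics $h(z_j)\sim Dj$ give $\sup_{J_k}h\circ v_0^n\leq C(n+k+1)$; (c) Lemma \ref{lem_densite_h} also bounds the variation of $1/h$ on $J_k$ by $C(k+1)/k^2$ and $\sup_{J_k}|1/h|$ by $C/k$. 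Substituting the scaling $\lambda(J_j)\sim Cj^{-(1+\gamma)/\gamma}$ into the resulting product expression produces a concrete bound for $V_k$.

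The main obstacle is to ensure that $\sum_k V_k$ is uniformly bounded in $n$. The naive combination of the estimates above yields $V_k\leq C(k/(n+k))^{1/\gamma}$, which is not summable: the terms stay of order one both in the regime $k\gg n$ and in the regime $k\sim n$. To rescue summability, I would exploit the sharper form of \eqref{bornev0n}, namely $|1-(v_0^n)'(x)/(v_0^n)'(y)|\leq C|x-y|$, which strengthens the bound on the variation of $(v_0^n)'$ on $J_k$ to $C\lambda(J_k)\sup_{J_k}(v_0^n)'$. An analogous near-cancellation occurs in the ratio $h(v_0^n x)/h(x)$ when both $x$ and $v_0^n x$ lie close to $0$, coming from the asymptotic $h(x)\asymp x^{-\gamma}$ and the closeness of $v_0^n x$ to $x$ there. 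Together these refinements should yield $V_k\leq C\min\{(k/(n+k))^{1/\gamma},\,n/k^2\}$, which sums to $O(1)$ uniformly in $n$. The technically delicate step will be the bookkeeping for the intermediate range $k\sim n$, where neither branch of the minimum is immediately small and both refinements must be applied in conjunction with the estimates of Lemma \ref{lem_densite_h}.
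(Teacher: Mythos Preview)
Your plan has a genuine gap at the step you yourself flag as ``technically delicate''. The ``near-cancellation'' in $h(v_0^n x)/h(x)$ for $x$ close to $0$ cannot be extracted from the asymptotic $h(x)\asymp x^{-\gamma}$ alone: that asymptotic bounds the \emph{values} of the ratio (indeed $0\le\psi_n\le 1$, as you note), but says nothing about its \emph{variation}. A function bounded between $0$ and $1$ can still have arbitrarily large variation, and the error term in $h(x)=Cx^{-\gamma}(1+o(1))$ is only known to be $\BV$, not to have small oscillation on each $J_k$. The paper makes the cancellation rigorous by a different mechanism: from $K1=1$ one derives the identity
\[
  1-\psi_n(x)=\sum_{j=0}^{n-1}\sum_{m\in M}\frac{(v_0^j)'(x)\,|v_m'(v_0^jx)|\,h(v_mv_0^jx)}{h(x)},
\]
which expresses $1-\psi_n$ as a sum of $n$ terms in which $h$ appears only through $h\circ v_m v_0^j$ (taking values in $(z_1,1]$, where $h$ has bounded variation) and $1/h$. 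This structural replacement is the missing idea.

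There is a second, related difficulty with the $J_k$ decomposition. Lemma~\ref{lem_densite_h} applied to a single $J_k$ gives $\Var(\I_{J_k}/h)\le C/k$ and $\Var(\I_{J_k}h\circ v_0^n)\le C(n+k)$; these bounds are saturated by the \emph{boundary values} $1/h(z_k)\asymp 1/k$ and $h(z_{n+k})\asymp n+k$, not by the oscillation inside $J_k$. With these inputs your product-rule estimate gives $V_k\le C(k/(n+k))^{1/\gamma}$, and even the partial sum $\sum_{k=1}^n V_k$ is already of order $n$ --- so the problem is not only in the regime $k\gtrsim n$. Your refined distortion bound improves only the term involving $\mathrm{var}((v_0^n)')$, leaving the two terms carrying $\mathrm{var}(h\circ v_0^n)$ and $\mathrm{var}(1/h)$ untouched; and even granting the identity above, with $\Var(\I_{J_k}/h)\le C/k$ you would obtain $V_k\le Cn/k$ for $k\ge n$, which still diverges. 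The paper avoids this by working on dyadic blocks $[z_{2^k},z_{2^{k-1}}]$: there Lemma~\ref{lem_densite_h} yields $\Var(\I/h)\le C/2^k$ (geometrically summable), and the first regime gives $\Var\le C(2^k/n)^{1/\gamma}$ for $2^k\le n$, which sums to $O(1)$. The dyadic grouping is not cosmetic; it is what makes the density estimates add up.
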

\begin{proof}
Since $K 1=1$, we have $h(x)=v_0'(x)h(v_0 x)+\sum_{m\in M}
|v_m'(x)| h(v_m x)$ on $[0, z_1]$. By iterating this equality, we
obtain for any $n\in \N$,
  \begin{equation*}
  h(x)=(v_0^n)'(x) h(v_0^nx)+\sum_{j=0}^{n-1}\sum_{m\in M} |(v_m v_0^j)'(x)| h( v_m v_0^j x)\, .
  \end{equation*}
Consequently,
  \begin{equation}
  \label{eqastuce}
  1-\frac{(v_0^n)'(x)h(v_0^n x)}{h(x)}=\sum_{j=0}^{n-1}\sum_{m\in M} \frac{(v_0^j)'(x) |v_m'(v_0^j x)|
  h( v_m v_0^j x)}{h(x)}\, .
  \end{equation}

Let $s$ be such that $2^s \leq n <2^{s+1}$. To prove
\eqref{eqcoupevar}, we will control, for any $k$,
  \begin{equation*}
  \Var\left(\I_{[z_{2^k}, z_{2^{k-1}}]}\frac{(v_0^n)'(x)h(v_0^n x)}{h(x)}\right)\, .
  \end{equation*}
Assume first that $k\leq s$. On $[z_{2^k}, z_{2^{k-1}}]$, the
function $(v_0^n)'$ is decreasing, so that its variation is
bounded in terms of its supremum $(v_0^n)'(z_{2^{k}})\leq C
\lambda(J_{2^{k}+n})/\lambda(J_{2^{k}})$. The variation of the
function $h\circ v_0^n$ on $[z_{2^k}, z_{2^{k-1}}]$ is the
variation of $h$ on $[z_{2^k+n}, z_{2^{k-1}+n}]$, hence by
Lemma \ref{lem_densite_h} it is bounded by $C (2^k+n)$. This
lemma also shows that the variation of $1/h$ is bounded by
$C/2^k$. Hence,
  \begin{align*}
  \Var\left(\I_{[z_{2^k}, z_{2^{k-1}}]}\frac{(v_0^n)'(x)h(v_0^n x)}{h(x)}\right)&
  \leq C \frac{ \lambda(J_{2^{k}+n})}{\lambda(J_{2^{k}})} \frac{ 2^k+n}{2^k}
  \\&
  \leq C \frac{ (2^{k}+n)^{-(1+\gamma)/\gamma}}{(2^{k})^{-(1+\gamma)/\gamma}} \frac{ 2^k+n}{2^k}
  \leq C \frac{(2^k)^{1/\gamma}}{n^{1/\gamma}}\, .
  \end{align*}
Summing on $k$, we get
\begin{equation}
  \label{eqvar1bis}
  \Var\left(\I_{[z_{2^{s+1}}, z_{1}]}\frac{(v_0^n)'(x)h(v_0^n x)}{h(x)}\right)
  \leq C \sum_{k=1}^{s}\frac{(2^k)^{1/\gamma}}{n^{1/\gamma}}
  \leq \frac{C 2^{ s/ \gamma}}{n^{1/\gamma}} \leq C\, ,
  \end{equation}
since $2^s \leq n$.

Let now $k>s$. The previous upper  bound gives a suboptimal
control, hence we shall use the right hand term in
\eqref{eqastuce}. For $0\leq j\leq n-1$ and $m\in M$, the
variation of $v'_m\circ v_0^j\cdot h\circ v_m\circ v_0^j$ is
uniformly bounded (since $v_m$ is $C^2$ and $h$ has bounded
variation on $(z_1,1]$). Moreover, as above, the variation of
$(v_0^j)'$ is bounded by $C \lambda(J_{2^{k}+j})/
\lambda(J_{2^{k}})$, which is uniformly bounded. Finally, the
variation of $1/h$ is at most $C/2^k$, by Lemma
\ref{lem_densite_h}. Consequently,
  \begin{equation*}
  \Var\left(\I_{[z_{2^k}, z_{2^{k-1}}]}\left(1-\frac{(v_0^n)'(x)h(v_0^n x)}{h(x)}\right)\right)
  \leq \sum_{j=0}^{n-1} \frac{C}{2^k}
  =\frac{Cn}{2^k}\, .
  \end{equation*}
Summing on $k>s$,
  \begin{equation}
  \label{eqvar2}
  \Var\left(\I_{[0, z_{2^{s+1}}]}\left(1-\frac{(v_0^n)'(x)h(v_0^n x)}{h(x)}\right)\right)
  \leq C n\sum_{k=s+1}^\infty \frac{1}{2^k} \leq \frac{Cn}{2^s} \leq D\, .
  \end{equation}
Lemma \ref{lem_controleCn} follows by combining
\eqref{eqvar1bis} and \eqref{eqvar2}.
\end{proof}

\medskip

Since $A_n$ is given by \eqref{opAn}, the upper bound
\eqref{eqfinalAa} follows from Lemma \ref{lem_controleAn}
below.
\begin{lma}\label{lem_controleAn}
There exists a positive constant $C$ such that, for any  $n\geq
1$,
  \begin{equation}
  \label{kqlsjflm}
  \Var\left(
  \I_{[0, z_1]}(x) \sum_{m\in M} \frac{|(v_m v_0^{n-1})'(x)|  h(v_m v_0^{n-1} x)}{ h(x)}
  \right)\leq \frac{C}{n}\, .
  \end{equation}
\end{lma}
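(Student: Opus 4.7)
The plan mirrors the one used in Lemma \ref{lem_controleCn}: partition $[0,z_1]$ into the dyadic pieces $[z_{2^k},z_{2^{k-1}}]$ for $k\geq 1$, bound the variation of the summand on each such piece, and sum. Since $M$ is finite it is enough to treat a single $m\in M$, and I would factor
\[
\I_{[0,z_1]}(x)\,\frac{|(v_m v_0^{n-1})'(x)|\,h(v_m v_0^{n-1}x)}{h(x)}=\I_{[0,z_1]}(x)\,a(x)b(x)c(x)d(x),
\]
with $a(x)=(v_0^{n-1})'(x)$, $b(x)=|v_m'(v_0^{n-1}x)|$, $c(x)=h(v_m v_0^{n-1}x)$, and $d(x)=1/h(x)$. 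Unlike for $C_n 1$, no telescoping identity in the spirit of \eqref{eqastuce} is needed here, because $A_n 1$ itself (rather than $1-A_n 1$) is the small quantity.

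On the piece $[z_{2^k},z_{2^{k-1}}]$ I would control each factor separately. Since $(v_0^{n-1})'$ is decreasing on $[0,z_1)$, combining this with \eqref{bornev0n} gives $\Var(\I_{[z_{2^k},z_{2^{k-1}}]}a)\leq C\lambda(J_{2^k+n-1})/\lambda(J_{2^k})$, and by Lemma \ref{lem_densite_h}, $\Var(\I_{[z_{2^k},z_{2^{k-1}}]}d)\leq C/2^k$. The remaining factors $b$ and $c$ have uniformly bounded variation: $v_0^{n-1}$ sends $[z_{2^k},z_{2^{k-1}}]$ into $[0,z_n]\subset[0,z_0]$, on which $v_m$ is $C^2$ by definition of $M$ and the choice of $z_0$; and the image $v_m v_0^{n-1}([z_{2^k},z_{2^{k-1}}])$ lies in a fixed small neighborhood of $v_m(0)\in(z_1,1]$, where $h$ is $\BV$. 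Using the product inequality $\Var(fg)\leq\Var(f)\Var(g)$ with the indicator-supported convention (so that boundary jumps absorb sup norms), this yields
\[
\Var\bigl(\I_{[z_{2^k},z_{2^{k-1}}]}A_n 1\bigr)\leq C\,\frac{\lambda(J_{2^k+n-1})}{\lambda(J_{2^k})}\cdot\frac{1}{2^k}.
\]

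Finally, let $s$ satisfy $2^s\leq n<2^{s+1}$. Using $\lambda(J_j)\sim j^{-(1+\gamma)/\gamma}$, the ratio $\lambda(J_{2^k+n-1})/\lambda(J_{2^k})$ is at most $C(2^k/n)^{(1+\gamma)/\gamma}$ when $k\leq s$ and at most $C$ when $k>s$. In the first regime the dyadic bound becomes $C(2^k)^{1/\gamma}/n^{(1+\gamma)/\gamma}$, and summing this geometric series over $1\leq k\leq s$ gives at most $Cn^{1/\gamma-(1+\gamma)/\gamma}=C/n$; in the second regime the bound $C/2^k$ sums to at most $C/2^s\leq C/n$. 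Adding the two contributions and using subadditivity of variation over the dyadic cover of $[0,z_1]$ establishes \eqref{kqlsjflm}.

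The main obstacle I expect is the careful bookkeeping of the boundary jumps both across the dyadic pieces and inside the product $abcd$ on each piece; this is of the same flavor as in Lemma \ref{lem_controleCn} and is by now fairly mechanical once the correct dyadic cutoff $k\sim s=\log_2 n$ has been identified. It is at this critical scale that the two regimes balance, which is ultimately why the target rate $1/n$ is the right one.
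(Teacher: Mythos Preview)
Your proposal is correct and follows essentially the same approach as the paper: dyadic decomposition into the intervals $[z_{2^k},z_{2^{k-1}}]$, bounding the variation of each factor $(v_0^{n-1})'$, $v_m'\circ v_0^{n-1}$, $h\circ v_m v_0^{n-1}$ and $1/h$ on each piece by $C\lambda(J_{2^k+n})/\lambda(J_{2^k})$, $C$, $C$ and $C/2^k$ respectively, and then splitting the sum over $k$ at $s=\lfloor\log_2 n\rfloor$ to get $C/n$ from each regime. The only cosmetic difference is that the paper treats $(v_m v_0^{n-1})'$ as a single factor rather than splitting it as your $a\cdot b$.
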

\begin{proof}
As in the proof of Lemma \ref{lem_controleCn}, we control the
variation of the functions on $[z_{2^k}, z_{2^{k-1}}]$. On this
interval, the variation of $(v_m v_0^{n-1})'$ is at most $C
\lambda(J_{2^{k}+n})/\lambda(J_{2^{k}})$, the variation of
$h(v_m v_0^{n-1})$ is bounded by $C$ and the variation of $1/h$
is bounded by $C/2^k$. Summing on $k$, we obtain
  \begin{multline*}
  \Var\left(
  \I_{[0, z_1]}(x) \sum_{m\in M} \frac{|(v_m v_0^{n-1})'(x)|  h(v_m v_0^{n-1} x)}{ h(x)}
  \right)
  \\
  \leq
  C\sum_{k=1}^\infty \frac{\lambda(J_{2^{k}+n})}{\lambda(J_{2^{k}})} \frac{1}{2^k}
  \leq D \sum_{k=1}^\infty \frac{ 2^{k(1+\gamma)/\gamma}}{(n+2^{k})^{(1+\gamma)/\gamma}}
  \frac{1}{2^k}\, .
  \end{multline*}
Let $s$ be such that  $2^s\leq n <2^{s+1}$.  We split the sum on
the sets $k\leq s$ and $k>s$, and we obtain the upper bound
  \begin{equation*}
  C\sum_{k=1}^s \frac{2^{ k(1+\gamma)/\gamma}}{(n+1)^{(1+\gamma)/\gamma}2^k}+
  C\sum_{k=s+1}^\infty \frac{1}{2^k} \\
  \leq  \frac{C 2^{s/\gamma}}{(n+1)^{(1+\gamma)/\gamma}} + \frac{1}{2^s}
  \leq \frac{D}{n}\, .
  \qedhere
  \end{equation*}
\end{proof}

It remains to prove \eqref{eqfinalBb}.   Recall that  $B_n$ is
given by \eqref{opBn}. On $(z_1, z_0]$, the variation of the
function $(v_0^n)'$ is bounded by  $C \lambda(J_n)/\lambda(J_0)
\leq C /n^{(1+\gamma)/\gamma}$, the variation of $1/h$ is
bounded by $C$, and the variation of $h(v_0^n x)$ is bounded by
$\Var(\I_{(z_{n+1},z_n]}h) \leq Cn$. This implies the upper
bound \eqref{eqfinalBb}. The proof of Proposition \ref{propvar}
is complete.

\subsection{Proof of Proposition \ref{propdecaycorr}}

To prove Proposition \ref{propdecaycorr}, we keep the same
notations as in the previous paragraphs. The proof follows the
line of that of Theorem 2.3.6 in Gou\"{e}zel (2004c). Let $f$ be a
function in $\BV$ with $\nu(f)=0$, we wish to estimate
$\nu(|K^n f|)$ thanks to the decomposition \eqref{eq_somme}.

For the term $C_n f$, we have
\[
 \nu(|C_n(f)|) \leq C\|f\|_\infty \nu(K^n \I_{[0,z_{n+1}]})
 =C\|f\|_\infty \nu( \I_{[0,z_{n+1}]})\, .
\]
Since $\nu (J_k) \leq C/(k+1)^{1/\gamma}$, it follows that
\begin{equation}\label{1Cn}
\nu(|C_n(f)|) \leq \frac{C\|f\|_\infty}{ (n+1)^{(1- \gamma)/\gamma}} \, .
\end{equation}

We now turn to the term $\sum_{a+k+b=n}A_a E_k B_b f$ in
\eqref{eq_somme}. Let us first remark that, for any bounded
function $g$,
\[
 \nu(|A_n(g)|) \leq C\|g\|_\infty\nu(K^n \I_{(z_1,1]\cap T^{-1}[0,z_{n}]})
 =C \|g\|_\infty \nu((z_1,1]\cap T^{-1}[0,z_{n}]).
\]
Since the density of $\nu$ is bounded on $(z_1,1]$, this
quantity is $\leq C\|g\|_\infty z_{n}$. We obtain
\begin{equation}\label{1An}
\nu(|A_n(g)|) \leq \frac{C\|g\|_\infty}{ (n+1)^{1/\gamma}} \, .
\end{equation}

Using successively \eqref{1An}, \eqref{eqfinalEk} and
\eqref{eqfinalBb},  we obtain
  \begin{equation}
  \label{AEB}
  \begin{split}
  \nu\Big( \Big| \sum_{a+k+b=n} A_a E_k B_b f  \Big| \Big ) &\leq
  C \sum_{a+k+b=n}\frac {\|E_k B_b f\|_{\infty}}{(a+1)^{1/\gamma}}
  \\&
  \leq  C \sum_{a+k+b=n}\frac {\Var(f)}{(a+1)^{1/\gamma}
  (k+1)^{(1-\gamma)/\gamma}(b+1)^{1/\gamma}}
  \\&
  \leq \frac{C \Var(f)}{(n+1)^{(1-\gamma)/\gamma}}\, .
  \end{split}
  \end{equation}

We finally turn to the term
$\sum_{a+k+b=n}A_a(\I_{(z_1,1]})\cdot \nu(B_b f)$ in
\eqref{eq_somme}. From \eqref{controlcentre} and \eqref{1An},
we obtain
  \begin{equation}\label{secondtermbis}
  \begin{split}
  \nu \left(\left|\sum_{a=0}^n A_a (\I_{(z_1,1]}) \cdot \left(\sum_{b=0}^{n-a}
  \nu( B_b f)\right)\right|\right)
  &\leq
  C \Var(f) \sum_{a=0}^n \frac{1}{(a+1)^{1/\gamma}(n+1-a)^{(1-\gamma)/\gamma}}
  \\&
  \leq \frac{D \Var(f)}{(n+1)^{(1-\gamma)/\gamma}}\, .
  \end{split}
  \end{equation}

We have shown that, if $\nu(f)=0$, all the terms on the right hand side of
\eqref{eq_somme} are bounded by $C
\Var(f)/(n+1)^{(1-\gamma)/\gamma}$. Therefore, $\nu(|K^n f|)$
is bounded by the same quantity.
Now let $f$ be any $\BV$ function on $[0, 1]$, and let $\|df \|$ be the variation norm of the measure $df$ on $[0, 1]$.
To conclude the proof, it suffices to note that
$$\nu(|K^n(f^{(0)})|)\leq \frac{C
\Var(f^{(0)})}{(n+1)^{(1-\gamma)/\gamma}}\leq \frac{3C\|df\|}{(n+1)^{(1-\gamma)/\gamma}}\, . \qed $$

\section{Proofs of the main results, Theorems \ref{ASmap},
\ref{LLNmap} and \ref{LLNmap2}} \label{secglob}

It is well known that  $(T^{0},T^1, T^2, \ldots , T^{n-1})$ is
distributed as $(Y_n,Y_{n-1}, \ldots, Y_1)$ where  $(Y_i)_{i
\geq 0}$ is a stationary Markov chain with invariant measure
$\nu$ and transition kernel $K$ (see for instance Lemma XI.3 in
Hennion and Herv\'{e} (2001)). Let $X_n=f(Y_n)-\nu(f)$ for some
function $f:[0,1]\to \R$. A common argument of the proofs of
Theorems \ref{ASmap} and \ref{LLNmap} is the following
inequality: for any $\varepsilon
>0$,
\begin{equation}
\label{equ2law}
\nu \Big ( \max_{1 \leq k \leq
  n} \Big |\sum_{i=0}^{k-1} (f \circ T^i
  -\nu(f))\Big| \geq \varepsilon \Big )  \leq \nu \Big ( 2 \max_{1 \leq k \leq
  n} \Big |\sum_{i=1}^{k}X_i\Big| \geq \varepsilon \Big ) \, .
  \end{equation}
Indeed  since
\[
(f -\nu(f), f \circ T
  -\nu(f), \ldots, f \circ T^{n-1}
  -\nu(f) ) \, \text{ is distributed as } \, (X_n,X_{n-1}, \ldots, X_1),
\]
 the following equality holds in distribution
\begin{equation}
  \label{equ1law}
  \max_{1 \leq k \leq
  n} \sum_{i=0}^{k-1} (f \circ T^i
  -\nu(f))  = \max_{1 \leq k \leq
  n} \sum_{i=k}^n X_i \, .
\end{equation}  Notice now that for any $k \in [1,n]$,
\[
\sum_{i=k}^n X_i = \sum_{i=1}^n X_i - \sum_{i=1}^{k-1} X_i \, .
\]
Consequently
\[
\max_{1 \leq k \leq
  n} \Big|\sum_{i=k}^n X_i \Big| \leq  \max_{1 \leq k \leq
  n-1} \Big |\sum_{i=1}^{k}X_i\Big| + \Big |\sum_{i=1}^{n}X_i\Big| \, ,
\]
which together with \eqref{equ1law} entails \eqref{equ2law}.

\subsection{Proof of Theorem \ref{ASmap}}

According to \eqref{equ2law}, Item 1 of Theorem \ref{ASmap}
holds as soon as
\begin{equation}\label{lilbounded2}
  \sum_{n=1}^\infty \frac{1}{n} {\mathbb P} \Big( 2\max_{1 \leq k \leq
  n} \Big |\sum_{i=1}^k X_i \Big|\geq A \sqrt {n \ln(\ln (n))} \Big) < \infty \, ,
\end{equation}
for some positive constant $A$. Using the extension $(\pi_i)_{i \in {\mathbb Z}}$ of the
chain $(Y_i)_{i \geq 0}$ given at the beginning of Section \ref{pardyn},
\eqref{lilbounded2} follows from the inequality \eqref{r1app2LIL}  of Theorem \ref{ASthm}
by taking
\[
A = 40\sqrt{2} \Big (\sum_{k \geq 1} \int_0^{\alpha_{1, {\bf Y}}(k)} Q^2(u) du \Big
)^{1/2}\, .
\]
By Theorem \ref{ASthm}, \eqref{r1app2LIL}  holds as soon as $f
\in \tMonm(Q, \nu)$ and \eqref{condQ} holds. In the same way,
Item 2 of Theorem \ref{ASmap} follows from \eqref{asr1} of
Theorem \ref{ASthm} provided that \eqref{condQ} holds.

Now, by Proposition \ref{weakalpha}, $\alpha_{2, \bf
Y}(n)=O(n^{(\gamma-1)/\gamma})$. Hence \eqref{r1app2LIL} holds
as soon as, for $p=2$,
 \begin{equation}\label{DMpbis}
 f \in \tMonm (Q,\nu), \quad \text{and} \quad
\int_0^{1} u^{-\gamma (p-1)/(1- \gamma)} Q^p (u) du  < \infty \,
  .
\end{equation}
If $H$ is the c\`{a}dl\`{a}g inverse of $Q$, then $f \in \Monm(H, \nu)$
iff $f \in \tMonm(Q, \nu)$. Moreover \eqref{DMpbis} holds if
and only if
\begin{equation}\label{equDMp}
f \in \Monm (H,\nu), \quad \text{and} \quad
\int_0^{\infty} x^{p-1} (H(x))^{\frac{1-p\gamma}{1-\gamma}} dx <\infty\, .
\end{equation}
Indeed, setting $v = u^{( 1 -\gamma p)/(1- \gamma)} $, we get
that
\[
\int_0^{1} u^{-\gamma (p-1)/(1- \gamma)} Q^p (u) du  = \frac{1 -
\gamma}{1- \gamma p} \int_0^1 Q^p ( v^{(1- \gamma)/( 1 -\gamma p)} )
dv \, .
\]
Since  $H$ is  the c\`{a}dl\`{a}g  inverse of $Q$, we get
\[
\int_0^1 Q^p ( v^{(1- \gamma)/( 1 -\gamma p)} ) dv = \int_0^{\infty}
\big( H( t^{1/p}) \big )^{\frac{ 1 -p\gamma }{1- \gamma}} dt
=p\int_0^{\infty} x^{p-1} (H(x))^{\frac{1-p\gamma}{1-\gamma}} dx\, ,
\]
which concludes the proof.

\subsection{Proof of Theorem \ref{LLNmap}} By using
\eqref{equ2law}, \eqref{SLN} will hold if we can prove that for
any $\varepsilon>0$ and any $p \in (1,2)$, one has
\begin{equation}\label{SLNmarass}
  \sum_{n=1}^\infty \frac{1}{n} {\mathbb P} \Big( \max_{1 \leq k \leq
  n} \Big |\sum_{i=1}^{k}X_i\Big|\geq n^{1/p} \varepsilon \Big) < \infty \,
  .
\end{equation}
According to Theorem 4 in Dedecker and Merlev\`{e}de (2007), we
have that
\begin{equation}\label{DMp}
 \sum_{n=1}^\infty \frac{1}{n} {\mathbb P} \Big( \max_{1 \leq k \leq
  n} \Big |\sum_{i=1}^{k}X_i\Big|\geq n^{1/p} \varepsilon \Big)
  \leq C \sum_{i=0}^\infty (i+1)^{p-2} \int_0^{\gamma_i} Q_{|X_0|}^{p-1}\circ G_{|X_0|} (u) du   \, ,
\end{equation}
where  $ \gamma_i = \| {\mathbb E } (X_i   |{\mathcal M}_0 )
\|_1 $ and $G_{|X_0|}$ is the inverse of $L_{|X_0|}(x) =
\int_0^{x} Q_{|X_0|} (u) du $. We will denote by $L$ and $G$
the same functions constructed from $Q$, the c\`{a}dl\`{a}g inverse of
$H$. Assume first that $X_i = f(Y_i)-\nu(f)$ with
$f=\sum_{\ell=1}^L a_{\ell} f_{\ell}$, where $f_\ell \in
\tMon(Q, \nu)$ and $\sum_{\ell=1}^L |a_\ell| \leq 1$. According
to \eqref{majgamma}
  \begin{equation}
  \label{majgamma2} \gamma_i \leq 4 \int_0^{\alpha_{1, {\bf Y}} (i)} Q(u) du \, .
  \end{equation}
Since $Q_{|X_0|}(u)\leq Q_{|f(Y_0)|} (u) + \nu(f)$, we see that
$\int_0^x Q_{|X_0|}(u) du \leq 2 \int_0^x Q_{|f(Y_0)|}(u) du$.
Since $f=\sum a_\ell f_\ell$, we get, according to item (c) of
Lemma 2.1 in Rio (2000),
\[
\int_0^{x} Q_{|X_0|} (u) du \leq 2\sum_{\ell=1}^L \int_0^{x} Q_{|a_\ell
f_\ell(X_0)|} (u) du \leq 2\sum_{\ell=1}^L |a_\ell| \int_0^{x} Q (u) du \, .
\]
Since $\sum_{\ell=1}^L |a_\ell| \leq 1$, it follows that $
G(u/2) \leq G_{|X_0|} (u)$, where $G$ is the inverse of $x
\mapsto \int_0^{x} Q (u) du $. In particular, $G_{|X_0|}(u)\geq
G(u/4)$. Since $Q_{|X_0|}$ is non-increasing, it follows that
  \begin{align*}
  \int_0^{\gamma_i} Q_{|X_0|}^{p-1}\circ G_{|X_0|} (u) du
  &\leq \int_0^{\gamma_i} Q_{|X_0|}^{p-1} \circ G(u/4) du
  = 4 \int_0^{\gamma_i/4} Q_{|X_0|}^{p-1} \circ G(v)dv
  \\&
  =4 \int_0^{ L(\gamma_i/4)} Q_{|X_0|}^{p-1}(w) Q(w)dw
  \leq 4 \int_0^{\alpha_{1, {\bf Y}} (i)} Q_{|X_0|}^{p-1} (w) Q(w) dw   \,
  ,
  \end{align*}
where the last inequality follows from \eqref{majgamma2}. Let
$\alpha^{-1}_1 (u)=\sum_{i \geq 0} \I_{u < \alpha_{1, {\bf Y}}
(i)}$.  Since $( \alpha^{-1}_1 (u) )^{p-1} = \sum_{j \geq 0}
\big ( (j+1)^{p-1} - j^{p-1} \big ) \I_{u < \alpha_{1, {\bf
Y}}(j)}$ and $(j+1)^{p-2} \leq C\big( (j+1)^{p-1} -
j^{p-1}\bigr)$, we get
  \begin{equation}\label{DMp2bis}
  \sum_{i=0}^\infty (i+1)^{p-2}\int_0^{\gamma_i} Q_{|X_0|}^{p-1}\circ G_{|X_0|} (u) du
  \leq C \int_0^{1}(\alpha_1^{-1}(u))^{p-1} Q_{|X_0|}^{p-1} (u) Q(u) du   \,
  .
\end{equation}
Using H\"{o}lder's inequality, we derive that
\begin{multline}\label{astuceholder}
\int_0^{1}(\alpha_1^{-1}(u))^{p-1} Q_{|X_0|}^{p-1} (u) Q(u) du
\leq \Big ( \int_0^{1}(\alpha_1^{-1}(u))^{p-1}  Q^p(u) du\Big
)^{1/p}\\ \times \Big ( \int_0^{1}(\alpha_1^{-1}(u))^{p-1}
Q_{|X_0|}^{p} (u) du\Big )^{(p-1)/p} \, .
\end{multline}
Now note that $Q_{|X_0|}^{p} = Q_{|X_0|^{p} } $. By convexity
and the fact that $\sum_{\ell=1}^L |a_\ell| \leq 1$,
\begin{equation*}
Q_{|X_0|^{p} }(u)\leq Q_{\sum_{\ell=1}^L
|a_\ell||f_\ell(Y_0)-\nu(f_\ell)|^p}(u)\, .
\end{equation*}
 Using again item (c) of Lemma 2.1 in Rio (2000), we get that
\begin{equation}\label{astuceRio}
\begin{split}
 \int_0^{1}(\alpha_1^{-1}(u))^{p-1} Q_{|X_0|}^{p} (u) du &\leq
 \sum_{\ell=1}^L |a_\ell|
\int_0^{1}(\alpha_1^{-1}(u))^{p-1}
Q_{|f_\ell(Y_0)-\nu(f_\ell)|^{p}} (u) du \\
& \leq  2^{p+1} \int_0^{1}(\alpha_1^{-1}(u))^{p-1} Q^{p} (u) du \, .
\end{split}
\end{equation}
It follows that
\begin{equation}\label{DMp2ter}
\sum_{i=0}^\infty (i+1)^{p-2}\int_0^{\gamma_i} Q_{|X_0|}^{p-1}\circ G_{|X_0|} (u) du
  \leq C
   \int_0^{1}(\alpha_1^{-1}(u))^{p-1}  Q^p(u) du   \,
  .
\end{equation}

From \eqref{DMp}, \eqref{DMp2ter} and the fact that $\alpha_{1,
{\bf Y}}(n)=O(n^{(\gamma-1)/\gamma})$ by Proposition
\ref{weakalpha}, it follows that
\begin{equation*}\label{DMp3}
\sum_{n=1}^\infty \frac{1}{n} {\mathbb P} \Big( \max_{1 \leq k \leq
  n} \Big |\sum_{i=1}^{k}X_i\Big|\geq n^{1/p} \varepsilon \Big)
  \leq C
\int_0^{1} u^{-\gamma (p-1)/(1- \gamma)} Q^p (u) du  \,
  ,
\end{equation*}
and the same inequality holds for any variable $X_i=f(Y_i) -
\E(f(Y_i)) $ with
 $f \in \tMonm (Q,\nu)$ by applying Fatou's lemma. Hence \eqref{SLNmarass}
holds as soon as \eqref{DMpbis} holds. Since \eqref{DMpbis} is
equivalent to \eqref{equDMp}, the result follows.

\subsection{Proof of  Theorem \ref{LLNmap2}}
By using \eqref{equ2law}, \eqref{SLN} will hold if we can prove
that for any $\varepsilon>0$, any $p$ in $(1,2]$ and any
$b>1/p$, one has
\begin{equation}\label{SLNmarass2}
  \sum_{n=1}^\infty \frac{1}{n} {\mathbb P} \Big( \max_{1 \leq k \leq
  n} \Big |\sum_{i=1}^{k}X_i\Big|\geq n^{1/p} (\ln (n))^b \varepsilon \Big) < \infty \,
  .
\end{equation}
Let $Q$ be the c\`{a}dl\`{a}g inverse of $H$. Note that $f \in \Monm(H,
\nu)$ if and only if $f \in \tMonm(Q, \nu)$, and that $H$
satisfies \eqref{ratecond*} if and only if $Q(u) \leq
(Cu)^{-(1-p\gamma)/(p(1-\gamma))}$.

We keep the same notations as in the proof  of Theorem
\ref{LLNmap}. Assume first that $X_i = \sum_{\ell=1}^L a_{\ell}
f_{\ell}(Y_i) - \sum_{\ell=1}^L a_{\ell}\E(f_{\ell}(Y_i))$,
with $f_\ell \in  \tMonm(Q, \nu)$ and $\sum_{\ell=1}^L |a_\ell|
\leq 1$. Define the function  $(\gamma/2)^{-1}(u)=\sum_{i \geq
0} \I_{u < \gamma_i/2}$, where $ \gamma_i = \| {\mathbb E }
(X_i   |{\mathcal M}_0 ) \|_1 $. Let $\bar
R_{|X_0|}(u)=U_{|X_0|}(u) Q_{|X_0|}(u)$, with
$U_{|X_0|}=((\gamma/2)^{-1}\circ G_{|X_0|}^{-1})$. We apply
Inequality (3.9) in Dedecker and Merlev\`{e}de (2007):
\begin{align*}
  {\mathbb P} \Big( \max_{1 \leq k \leq
  n} \Big |\sum_{i=1}^{k}X_i\Big|\geq 5x \Big) &\leq
  \frac{14n}{x}\int_0^1 Q_{|X_0|}(u) \I_{x < \bar R_{|X_0|}(u)} du
   \\
  &\ \  + \frac{4n}{x^2} \int_0^1
  \I_{x \geq  \bar R_{|X_0|}(u)} \bar R_{|X_0|}(u)
  Q_{|X_0|}(u) du \, .
\end{align*}
Taking $x_n=\varepsilon n^{1/p} (\ln(n))^b/5$, and summing in
$n$, we obtain that
  \begin{align*}
  \sum_{n=1}^\infty \frac{1}{n} {\mathbb P} \Big( \max_{1 \leq k \leq n} \Big
  |\sum_{i=1}^{k}X_i\Big|\geq n^{1/p} (\ln (n))^b \varepsilon \Big)
  &\leq C \int_0^1
  \frac{\bar R^{p-1}_{|X_0|}(u)}{(\ln(\bar R_{|X_0|}(u))\vee 1)^{bp}} Q_{|X_0|}(u) du
  \\
  &\leq D  \int_0^1  \frac{U_{|X_0|}^{p-1}(u)}{(\ln(U_{|X_0|}(u))\vee 1)^{bp}}
  Q^p_{|X_0|}(u) du  \, .
\end{align*}
Now, we make the change of variables $u=G_{|X_0|}(y)$, and we
use that $G(y/2)\leq G_{|X_0|}(y)$. It follows that
\begin{equation*}
\sum_{n=1}^\infty \frac{1}{n} {\mathbb P} \Big( \max_{1 \leq k \leq n} \Big
  |\sum_{i=1}^{k}X_i\Big|\geq n^{1/p} (\ln (n))^b \varepsilon \Big) \leq
  C  \int_0^{\|X_0\|_1}  \frac{((\gamma/2)^{-1}(y))^{p-1}}{(\ln((\gamma/2)^{-1})(y)\vee 1)^{bp}}
  Q^{p-1}_{|X_0|}\circ G(y/2) dy \, .
\end{equation*}
Let $U(u)=((\gamma/2)^{-1}\circ 2G_{}^{-1})(u)$, and make the
change of variables $u=G(y/2)$. We obtain
\begin{equation*}
\sum_{n=1}^\infty \frac{1}{n} {\mathbb P} \Big( \max_{1 \leq k \leq n} \Big
  |\sum_{i=1}^{k}X_i\Big|\geq n^{1/p} (\ln (n))^b \varepsilon \Big) \leq
  C  \int_0^{1}  \frac{U^{p-1}(u)}{(\ln(U(u))\vee 1)^{bp}}
  Q^{p-1}_{|X_0|}(u) Q(u) du \, .
\end{equation*}
 From \eqref{majgamma2} we infer that $U(u)\leq C u^{-\gamma/(1-\gamma)}$, so that
 \[
\sum_{n=1}^\infty \frac{1}{n} {\mathbb P} \Big( \max_{1 \leq k \leq n} \Big
  |\sum_{i=1}^{k}X_i\Big|\geq n^{1/p} (\ln (n))^b \varepsilon \Big) \leq C
\int_0^1 \frac{u^{-\gamma(p-1)/(1-\gamma)}}{|\ln(u)|^{bp}\vee 1}Q_{|X_0|}^{p-1}(u)Q(u) du
\, .
 \]
 Applying H\"{o}lder's inequality as in \eqref{astuceholder}, and next applying
 item (c) of Lemma 2.1 in Rio (2000)
 as in \eqref{astuceRio}, it follows that
 \[
\sum_{n=1}^\infty \frac{1}{n} {\mathbb P} \Big( \max_{1 \leq k \leq n} \Big
  |\sum_{i=1}^{k}X_i\Big|\geq n^{1/p} (\ln (n))^b \varepsilon \Big) \leq C
\int_0^1 \frac{u^{-\gamma(p-1)/(1-\gamma)}}{|\ln(u)|^{bp}\vee 1}Q^p(u) du \, .
 \]
 Since $Q^p(u)
\leq (Cu)^{-(1-p\gamma)/(1-\gamma)}$,   it follows that
\begin{equation}\label{ouf!}
\sum_{n=1}^\infty \frac{1}{n} {\mathbb P} \Big( \max_{1 \leq k \leq n} \Big
  |\sum_{i=1}^{k}X_i\Big|\geq n^{1/p} (\ln (n))^b \varepsilon \Big) \leq C
\int_0^1 \frac{1}{u(|\ln(u)|^{bp}\vee 1)} du \, ,
\end{equation}
and the same inequality holds for any variable $X_i=f(Y_i) -
\E(f(Y_i)) $ with $f \in \tMonm (Q,\nu)$ by applying Fatou's
lemma. Now the right-hand term in \eqref{ouf!} is finite as
soon as $bp>1$, which concludes the proof.

\appendix
\section{Appendix}
\label{append}

We recall a maximal exponential inequality for martingales which is a straightforward
consequence of Theorem 3.4 in Pinelis (1994).

\begin{prop} \label{pinelis}
Let $( d_j, {\cal F}_j )_{j \geq 1}$ be a real-valued martingale difference sequence with
$|d_j|
 \leq c$ for all $j$. Let $M_j = \sum_{i=1}^j d_i$. Then for all $x,y
>0$,
  \begin{equation*}
  \p \Big ( \sup_{1 \leq j \leq n} |M_j| \geq x ,
  \sum_{j=1}^n \E ( \vert d_j \vert^2 | {\cal F}_{j-1} ) \leq y \Big)
  \leq
  2 \exp \left( -\frac {y}{c^2} \, h
  \Big (  \frac{xc}{y}  \Big )\right)  \, ,
  \end{equation*}
where $h(u)= (1+u) \ln (1+u) -u$.
\end{prop}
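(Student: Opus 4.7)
The plan is to reduce the statement to the almost-sure version of the Bennett-type maximal inequality that is the content of Theorem 3.4 in Pinelis (1994). That result applies to a martingale $(M_j)_{j \leq n}$ with $|d_j| \leq c$ a.s.\ and an \emph{almost sure} bound on the conditional quadratic variation: if $\sum_{j=1}^n \E(d_j^2 \mid \F_{j-1}) \leq y$ a.s., then $\p(\sup_{j\leq n} M_j \geq x) \leq \exp(-(y/c^2) h(xc/y))$. Our task is to remove the a.s.\ assumption and replace it by the localization to the event $\{\sum_{j=1}^n \E(d_j^2 \mid \F_{j-1}) \leq y\}$, and then to get the two-sided control of $|M_j|$.

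I would first introduce the stopping time
\[
  \tau = \inf\Bigl\{k\geq 0 \tq \sum_{j=1}^{k+1} \E(d_j^2 \mid \F_{j-1}) > y\Bigr\} \wedge n,
\]
which is a $(\F_j)$-stopping time because the sums are $\F_k$-measurable. Setting $\tilde d_j = d_j \I_{j\leq \tau}$ and $\tilde M_k = \sum_{j=1}^k \tilde d_j = M_{k\wedge \tau}$, one obtains a new martingale (for the same filtration) whose increments still satisfy $|\tilde d_j| \leq c$ a.s.\ and whose conditional quadratic variation is controlled \emph{almost surely}: by the very definition of $\tau$, $\sum_{j=1}^n \E(\tilde d_j^2 \mid \F_{j-1}) = \sum_{j=1}^{\tau} \E(d_j^2 \mid \F_{j-1}) \leq y$ a.s. Consequently Pinelis's one-sided maximal inequality applies directly to $\tilde M$, and also to $-\tilde M$ after the same argument.

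To finish, I would observe that on the event $\Omega_y := \{\sum_{j=1}^n \E(d_j^2\mid \F_{j-1}) \leq y\}$ one has $\tau = n$, so $\tilde M_k = M_k$ for every $k\leq n$. Therefore
\[
  \p\Bigl(\sup_{1\leq j\leq n}|M_j|\geq x,\ \Omega_y\Bigr)
  \leq \p\Bigl(\sup_{1\leq j\leq n}|\tilde M_j|\geq x\Bigr)
  \leq 2 \exp\Bigl(-\tfrac{y}{c^2}\, h\bigl(\tfrac{xc}{y}\bigr)\Bigr),
\]
the last step following from a union bound applied to $\{\sup_j \tilde M_j \geq x\}$ and $\{\sup_j (-\tilde M_j) \geq x\}$, each of which is handled by one-sided Pinelis.

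There is no serious obstacle here; the content is already in Pinelis (1994). The only point that requires attention is the careful bookkeeping in the definition of $\tau$, so that the stopped conditional quadratic variation sum is bounded by $y$ (and not by $y+c^2$, which would force a slightly weaker constant). Defining $\tau$ as the last index \emph{before} the sum exceeds $y$ avoids this issue and yields precisely the stated bound.
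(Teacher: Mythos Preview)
Your proof is correct and essentially identical to the paper's. The paper defines $A_i=\{\sum_{j=1}^i \E(|d_j|^2\mid\F_{j-1})\leq y\}$ and works with the martingale $\bar M_j=\sum_{i=1}^j d_i\I_{A_i}$, which is exactly your stopped martingale $\tilde M_j$ since $A_i=\{i\leq\tau\}$; it then applies Pinelis's Theorem~3.4 directly to $\bar M$.
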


\begin{proof} Let $A_i=\{ \sum_{j=1}^i \E ( \vert d_j \vert^2 | {\cal F}_{j-1} ) \leq y \}$, and
let $\bar M_j$ be the martingale $
 \bar M_j = \sum_{i=1}^j d_i\I_{A_i}
$. Clearly
  \begin{align*}
  \p \Big ( \sup_{1 \leq j \leq n} | M_j| \geq x ,
  \sum_{j=1}^n \E ( \vert d_j \vert^2 | {\cal F}_{j-1} ) \leq y \Big)
  &=
  \p \Big ( \sup_{1 \leq j \leq n} |\bar M_j | \geq x , \sum_{j=1}^n \E ( \vert d_j \vert^2 | {\cal
  F}_{j-1} ) \leq y \Big)
  \\&
  \leq
  \p \Big ( \sup_{1 \leq j \leq n} | \bar M_j| \geq x\Big)\, .
  \end{align*}
To conclude, it suffices to apply Theorem 3.4 in Pinelis (1994) to the martingale $\bar
M_j$.
\end{proof}



\begin{thebibliography}{9}

\small








\bibitem {Berger} Berger, E. (1990). An almost sure invariance principle for stationary ergodic sequences of Banach space valued random variables,
{\em Probab. Theory Relat. Fields} {\bf 84} 161-201.

\bibitem {DeMe2}  Dedecker, J. and  Merlev\`{e}de, F. (2007).
Convergence rates in the law of large numbers for Banach-valued dependent variables, {\em
Teor. Veroyatnost. i Primenen} {\bf 52} 562-587.

\bibitem {DePr} Dedecker, J. and Prieur, C. (2009).
Some unbounded functions of intermittent maps for which the
central limit theorem holds.  {\em ALEA Lat. Am. J. Probab.
Math. Stat.} {\bf 5} 29-45.

\bibitem{DR1} Dedecker, J. and Rio, E.  (2008). On mean central limit
theorems for stationary sequences.  {\em Ann. Inst. H. Poincar\'{e} Probab. Statist.} {\bf
44} 693-726.

\bibitem{EJ} Esseen, C.-G. and Janson, S. (1985). On moment conditions for normed sums
of independent variables and martingale differences. {\it Stochastic Process. Appl.} {\bf
19} 173-182.

\bibitem {Fel} Feller, W. (1966). An introduction to probability theory and its
applications. Vol. II. John Wiley \& Sons, Inc., New
York-London-Sydney, 636 pp.

\bibitem {Fel2} Feller, W. (1968). An extension of the law of the iterated logarithm to variables
without variance.  {\it J. Math. Mech.}  {\bf 18}   343-355.

\bibitem{FHV} Ferrero, P., Haydn, N. and Vaienti, S. (2003). Entropy fluctuations for
parabolic maps. {\it Nonlinearity} {\bf 16} 1203-1218.

\bibitem {Gor} Gordin, M. I. (1973). Abstracts of communication, T.1:A-K, International
conference on probability theory, Vilnius.


\bibitem{Gou04a} Gou\"{e}zel, S.  (2004a). Central limit theorem and stable
laws for intermittent maps. {\em Probab. Theory Relat. Fields} {\bf 128} 82-122.

\bibitem{Gou04b} Gou\"{e}zel, S.  (2004b). Sharp polynomial estimates for the decay of
correlations. {\em Isr. J. Math.} {\bf 139} 29-65.

\bibitem{Gou04c} Gou\"{e}zel, S.  (2004c). Vitesse de
d\'{e}corr\'{e}lation et th\'{e}or\`{e}mes limites pour les applications non
uniform\'{e}ment dilatantes. Th\`{e}se 7526 de l'Universit\'{e} Paris Sud.

\bibitem{Gou07} Gou\"{e}zel, S. (2007).  A Borel-Cantelli lemma for intermittent interval maps.
{\em Nonlinearity} {\bf 20} 1491-1497.




\bibitem{HH} Hennion, H. and Herv\'{e}, L.  (2001).   Limit theorems for Markov chains
 and stochastic
 properties of dynamical systems by quasi-compactness. Lecture Notes in Mathematics
  {\bf
 1766}, Springer.

 \bibitem{H} Heyde, C. C. (1969). A note concerning behaviour of iterated logarithm type.
 {\em Proc. Amer. Math. Soc.} {\bf 23} 85-90.

\bibitem{HK} Hofbauer, F. and Keller, G. (1982). Ergodic properties of invariant measures
for piecewise monotonic transformations. {\em Math. Z.} {\bf
180} 119-140.

\bibitem{Ko} Kontoyiannis, I. (1998). Asymptotic recurrence and waiting times for
stationary processes. {\em J. Theoret. Probab} {\bf 11}
795-811.

\bibitem{LSV} Liverani, C., Saussol, B. and Vaienti S. (1999). A
 probabilistic approach to intermittency. {\em Ergodic Theory Dynam.
 Systems} {\bf 19} 671-685.

 \bibitem{MelNic} Melbourne, I. and Nicol, M. (2005). Almost sure invariance
 principle for nonuniformly hyperbolic systems. {\em Commun. Math. Phys.} {\bf 260}
 131-146.

\bibitem{Mer} Merlev\`{e}de, F. (2008). On a maximal inequality for strongly mixing random
variables in Hilbert spaces. Application to the compact law of the iterated logarithm.
{\em Publ. Inst. Stat. Univ. Paris} {\bf 12} 47-60.

\bibitem{PS} Philipp, W. and Stout, W.F. (1975). Almost sure invariance principle
for partial sums of weakly dependent random variables. {\em
Mem. of the Amer. Math. Soc.} {\bf 161}. Providence, RI: Amer.
Math. Soc.

\bibitem{Pi} Pinelis,  I. (1994). Optimum bounds for the distributions of
martingales in Banach spaces. {\em Ann. Probab.} {\bf 22} 1679-1706.

\bibitem{PoMa} Pomeau, Y.  and Manneville, P.  (1980), Intermittent transition
to turbulence in dissipative dynamical systems. {\em Commun. Math. Phys.} {\bf 74}
189-197.

\bibitem{RIO93}  Rio,  E. (1993). Covariance inequalities for
strongly mixing processes. {\em Ann. Inst. H. Poincar\'{e} Probab. Statist.} {\bf 29}
587-597.

\bibitem{Rio952}  Rio,  E. (1995). The functional law of the iterated logarithm for stationary
strongly mixing sequences. {\it Ann. Probab.} {\bf  23}
1188-1203.

\bibitem{Rio2000} Rio,  E. (2000). {\it Th\'{e}orie asymptotique des processus
al\'{e}atoires faiblement d\'{e}pendants}. {\it Math\'{e}matiques et
applications de la SMAI.} {\bf 31} Springer.

\bibitem{Ro} Rosenblatt, M. (1956). A central limit theorem and a strong mixing condition,
{\it Proc. Nat. Acad. Sci. U. S. A. } {\bf 42} 43-47.


\bibitem{Sarig} Sarig, O.  (2002). Subexponential decay of
correlations. {\em Inv. Math.} {\bf 150} 629-653.

\bibitem{st}  Stout,  W. F. (1974). Almost sure convergence. {\it Academic Press, New-York.}




\bibitem{VolSa} Voln\'{y}, D. and  Samek, P. (2000). On the invariance principle and the law
of iterated logarithm for stationary processes. {\em
Mathematical physics and stochastic analysis}
 424--438, World Sci. Publishing, River Edge.

\bibitem{Young} Young, L.-S.~(1999). Recurrence times and rates of mixing. {\em Israel J.
Math.}  {\bf 110} 153-188.

\bibitem{Zwei} Zweim\"{u}ller, R. (1998). Ergodic structure and invariant densities of non-markovian interval maps with indifferent fixed points.
{\em Nonlinearity} {\bf 11} 1263-1276.

\end{thebibliography}
\end{document}